\documentclass[12pt]{amsart}
\usepackage{amsmath,amscd,amsthm,amsfonts, amssymb,amsxtra}
\usepackage{calrsfs}
\usepackage[all]{xy} \SelectTips{cm}{}
\usepackage{enumitem}
\usepackage{hyperref}
  \hypersetup{colorlinks=true,citecolor=blue}
  \usepackage{graphicx}
\CDat
\subjclass[2010]{Primary: 57P10,57N35, 57R40; Secondary: 57R19}
\newtheorem{thm}{Theorem}[section]  
\newtheorem*{un-no-thm}{Theorem}
\newtheorem{cor}[thm]{Corollary}     
\newtheorem{lem}[thm]{Lemma}         
\newtheorem{prop}[thm]{Proposition}  

\newtheorem{conjecture}[thm]{Conjecture}
\newtheorem{ques}[thm]{Question}
\newtheorem{bigthm}{Theorem}
\newtheorem{bigcor}[bigthm]{Corollary}

\theoremstyle{definition}
\newtheorem{defn}[thm]{Definition}   

\theoremstyle{definition}

\theoremstyle{definition}
\theoremstyle{remark}
\newtheorem{rem}[thm]{Remark}
\newtheorem{rems}[thm]{Remarks}

\newtheorem{hypo}[thm]{Hypothesis}
\newtheorem{notation}[thm]{Notation}
\newtheorem*{acks}{Acknowledgements}

\newtheorem*{cl}{Claim} 
\newtheorem*{out}{Outline}

\newtheorem*{intro-rem}{Remark}
\newtheorem*{intro-rems}{Remarks}

\newtheorem{ex}[thm]{Example}

\DeclareMathOperator*{\holim}{holim}

\DeclareMathOperator*{\colim}{colim}

\DeclareMathOperator{\secs}{sec}
\DeclareMathOperator{\lift}{lifts}

\DeclareMathOperator{\pd}{pd}
\DeclareMathOperator{\sm}{sm}
\DeclareMathOperator{\bl}{bl}
\DeclareMathOperator{\st}{st}



\begin{document}
\title{Embeddings, normal invariants and
functor calculus}
\date{\today} 
\author{John R. Klein} 
\address{Department of Mathematics, Wayne State University,
Detroit, MI 48202} 
\email{klein@math.wayne.edu} 

\begin{abstract}  This paper investigates the space of codimension zero embeddings
of a Poincar\'e duality space in a disk. One of our main results
exhibits a tower that interpolates from
the space of Poincar\'e immersions to a certain space of ``unlinked''
Poincar\'e embeddings. The layers of this tower are described in terms
of the coefficient spectra of the identity appearing in Goodwillie's
homotopy functor calculus. We also answer a question posed to us by Sylvain Cappell.
The appendix proposes
a conjectural relationship between our tower and the manifold calculus tower for 
the smooth embedding space.
 \end{abstract}

\maketitle
\setlength{\parindent}{15pt}
\setlength{\parskip}{1pt plus 0pt minus 1pt}
\def\Top{\bold T\bold o \bold p}
\def\wTop{\text{\rm w}\bold T}
\def\wT{\text{\rm w}\bold T}
\def\vo{\varOmega}
\def\vs{\varSigma}
\def\smsh{\wedge}
\def\esmsh{\,\hat\wedge\,}
\def\flush{\flushpar}
\def\dbslash{\smsh!\! /}
\def\:{\colon\!}
\def\Bbb{\mathbb}
\def\bold{\mathbf}
\def\cal{\mathcal}
\def\orb{\cal O}
\def\hoP{\text{\rm ho}P}

\setcounter{tocdepth}{1}
\tableofcontents
\addcontentsline{file}{sec_unit}{entry}

\section{Introduction \label{sec:intro}}

\subsection{Background} Suppose
that $P$ and $N$ are compact smooth $n$-manifolds, possibly with boundary.
Let $E^{\sm}(P,N)$ be the space of smooth ($C^\infty$) embeddings
from $P$ into the interior of $N$.
The {\it manifold calculus} of Goodwillie and Weiss produces a tower  
of fibrations
\[
\cdots \to E^{\sm}_2(P,N) \to E^{\sm}_1(P,N)
\]
and compatible maps $E^{\sm}(P,N) \to E^{\sm}_j(P,N)$. 
If we assume that $P$ admits a handle decomposition 
with handles of index at most $n-3$, then the maps $E^{\sm}(P,N) \to E^{\sm}_j(P,N)$
have connectivity given by a linear function of $j$
with positive slope, so in this case the tower strongly converges \cite{GW}, \cite{GK2}.
Furthermore, $E^{\sm}_1(P,N)$ 
has the homotopy type of the space of smooth immersions from $P$ to $N$. 
For $j \ge 2$, the layers
of the tower, i.e., the homotopy fibers of the maps $E^{\sm}_j(P,N) \to E^{\sm}_{j-1}(P,N)$,
have an explicit description in terms of configuration spaces.

In essence, the strong convergence result relies on the following schematic passage:
\[
E^{\sm}(P,N) \to E^{\bl}(P,N) \to E^{\pd}(P,N)\, ,
\]
where $E^{\bl}(P,N)$ is the space of smooth block embeddings of $P$ in $N$ and
$E^{\pd}(P,N)$ is the corresponding space of Poincar\'e embeddings. 
Convergence is proved by establishing certain higher excision statements,
which are known as ``multiple disjunction'' results for spaces of smooth embeddings. 
One achieves such results by first
 proving analogous ones for spaces of Poincar\'e embeddings. The Poincar\'e
 statements were
proved in \cite{GK1} using homotopy theory. One then lifts the Poincar\'e statements
to the block setting using surgery theory. The final step is to lift the block statements
to the smooth ones using concordance theory. Given the method of proof, it
seems  appropriate to ask:

\begin{ques} \label{ques:q1} Is there an analogue of Goodwillie-Weiss 
manifold calculus in the Poincar\'e duality space setting?
\end{ques}

More precisely, suppose now that $P$ and $N$ are Poincar\'e spaces 
of dimension $n$ (possibly with boundary).
One then has a space of Poincar\'e embeddings $E^{\pd}(P,N)$ and we wish to 
construct a  Goodwillie-Weiss calculus for it. Unfortunately, we do not
know how to proceed. The problem
here is that the set-up of \cite{Weiss} does not properly translate over: in the manifold case one considers the poset 
of subsets of the interior of $P$ which are diffeomorphic to finite collections of open balls. This poset has good properties because a manifold is locally Euclidean.
In the Poincar\'e case there does not seem to be a sensible replacement for this,
as Poincar\'e spaces are not necessarily locally well-behaved. 

A related but perhaps more accessible question is

\begin{ques}  \label{ques:q2}
Is there a version of the Goodwillie-Weiss tower in the Poincar\'e embedding case?
\end{ques} 

We propose to attack Question \eqref{ques:q2}
from a point-of-view arising out of the surgery school in conjunction with one of the
other functor calculi: Goodwillie's {\it homotopy functor calculus.}

To simplify the presentation, we will only consider the case when $N=D^n$ is an 
$n$-disk, and we will assume that $P$ is ``sectioned'' in the sense defined below.
We  will see that a certain space 
\[
\frak LE^{\pd}(P,D^n) 
\]
of ``unlinked'' Poincar\'e embeddings of $P$ in $D^n$ does have 
a tower associated with it. A point in this space consists of a Poincar\'e embedding of
$P$ in $D^n$ together with a choice of nullhomotopy of the ``link'' of the embedding.
We will also see that the tower associated with this space
strongly converges under mild hypotheses,
and its first stage coincides with the space of 
``Poincar\'e immersions'' of $P$  in $D^n$. Furthermore, we will identify the 
homotopy fibers of the canoncial map $\frak LE^{\pd}(P,D^n) \to E^{\pd}(P,D^n)$ as spaces of ``unlinkings'' of $P$ in $D^n$.

In what follows we simplify
notation by setting
\[
E(P,D^n) := E^{\pd}(P,D^n)\, .
\]

\subsection{Sectioned Poincar\'e spaces}
Our goal will be to say something
sensible about the embeddings of the following class of Poincar\'e spaces:

\begin{defn} A {\it sectioning} of a Poincar\'e space $P$ with boundary $\partial P$
is
a triple
\[
\xi = (K,f,s)
\]
in which
\begin{itemize}
\item $K$ is a cofibrant space;
\item $f\: K @> \simeq >> P $ is a homotopy equivalence;
\item $s\: K \to \partial P$ is a map such that the composition
\[
K @> s >> \partial P \to P
\]
coincides with $f$. 
\end{itemize}
We refer to $\xi$ as {\it sectioning data.} For the sake of brevity, we will
say that $P$ is {\it sectioned} when the sectioning data
are understood.
\end{defn}

\begin{ex} \label{ex:timesI} Let $D^1 = [-1,1]$ be the 1-disk. 
If $Q$ is a Poincar\'e space, possibly with boundary
$\partial Q$, then 
$Q\times D^1$ is  sectioned by means of the homotopy equivalence
$Q\times \{-1\} \subset Q\times D^1$ and the inclusion
$Q\times \{-1\}\subset \partial (Q\times D^1)$.
\end{ex}

\begin{ex} \label{ex:spherical_fibration}
Suppose $\eta$ is a $(j-1)$-spherical fibration over 
a  Poincar\'e space $Q$ of dimension $d$ having empty boundary. 
Let $S(\eta)$ be its total space. 
Suppose $\eta$ comes equipped with a section $s\: Q \to S(\eta)$. 
Let $D(\eta)$ be the mapping cylinder of $\eta$, and let
$f\: Q \to D(\eta)$ be the inclusion. Then $\xi := (Q,f,s)$ is 
sectioning data for the $(n+j)$-dimensional Poincar\'e space $D(\xi)$.
\end{ex}

\begin{defn}[Generalized Thom Space]
\label{defn:thom} If $P$ is sectioned by $\xi = (K,f,s)$, 
then we define
\[
P^\xi := \partial P \cup_s CP\, ,
\]
i.e., the mapping cone of the map $s\: K\to \partial P$. This is a based space.
\end{defn}

The justification  for this notation/terminology is that 
the spherical fibration case in Example \ref{ex:spherical_fibration} gives
the Thom space in the usual sense. 

\begin{lem} Assume that $P$ is sectioned by $\xi = (K,f,s)$ and
$\partial P \to P$  is a cofibration. Then there is a preferred weak homotopy equivalence
\[
\Sigma P^\xi \simeq P/\partial P\, .
\]
That is, $P^\xi$ is a preferred desuspension of $P/\partial P$.
 \end{lem}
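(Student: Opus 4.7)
The plan is to exhibit $P/\partial P$ as the suspension of $P^\xi$ by constructing a natural cofibration sequence $P^\xi \to C(f) \to P/\partial P$ whose middle term is weakly contractible. Here $C(f) := P \cup_f CK$ denotes the mapping cone of $f \: K \to P$; since $f$ is a weak equivalence and $K$ is cofibrant, $C(f) \simeq *$.

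First I would observe that the sectioning condition, namely that the composition $K \xrightarrow{s} \partial P \hookrightarrow P$ equals $f$, yields a natural map
\[
P^\xi \ = \ \partial P \cup_s CK \ \longrightarrow \ P \cup_f CK \ = \ C(f)
\]
induced by $\partial P \hookrightarrow P$ on one factor and by the identity on $CK$. Because $\partial P \to P$ is a cofibration, so is this map.

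Next I would identify the cofiber. By construction $P^\xi$ is the pushout of $CK \leftarrow K \xrightarrow{s} \partial P$, so attaching $P$ along $\partial P$ produces a pushout square
\[
\begin{CD}
\partial P @>>> P^\xi \\
@VVV @VVV \\
P @>>> C(f)
\end{CD}
\]
since $P \cup_{\partial P} (\partial P \cup_K CK) = P \cup_K CK = C(f)$. As the left vertical arrow is a cofibration, so is the right, and the two vertical cofibers agree: $C(f)/P^\xi \cong P/\partial P$.

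Assembling the above, the cofiber sequence $P^\xi \to C(f) \to P/\partial P$ together with the contractibility $C(f) \simeq *$ yields the connecting weak equivalence $P/\partial P \xrightarrow{\simeq} \Sigma P^\xi$, manifestly natural in the sectioning data. The main technical point is to ensure that the mapping cone of $f$ computes the correct homotopy cofiber; this is where cofibrancy of $K$ enters, via the standard fact that the homotopy cofiber of a weak equivalence is weakly contractible.
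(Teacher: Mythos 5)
Your proof is correct, and it takes a genuinely different route from the paper's. The paper assembles a $3\times 3$ grid of homotopy cofiber sequences
\[
\xymatrix{
 K \ar[r]^{f}_{\sim} \ar[d]_{s} & P \ar[r]\ar@{=}[d] & P \cup_K CK\ar[d] \\
 \partial P \ar[r]\ar[d] & P \ar[r] \ar[d] & P\cup_{\partial P} C\partial P  \ar[d]^{\sim} \\
P^\xi \ar[r] & CP \ar[r] & CP \cup_{P^\xi} CP^\xi
}
\]
and deduces the equivalence from the contractibility of the upper-right corner, identifying the bottom-right entry with $\Sigma P^\xi$ by hand. You instead isolate a single pushout square
\[
\begin{CD}
\partial P @>>> P^\xi \\
@VVV @VVV \\
P @>>> C(f)
\end{CD}
\]
which simultaneously exhibits $P^\xi \hookrightarrow C(f)$ as a cofibration (cobase change of $\partial P \hookrightarrow P$) with cofiber $P/\partial P$, and then invokes the Puppe sequence with the contractible middle term $C(f)$. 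This is more economical: it avoids the three-by-three lemma entirely and requires only one cofibration, one pushout, and one application of the Barratt--Puppe connecting map, rather than organizing four homotopy cofiber sequences. The two arguments hinge on the same essential inputs (cofibrancy of $K$ so that $C(f)$ computes the homotopy cofiber of the weak equivalence $f$, and the cofibration hypothesis on $\partial P \to P$ to control the pushout), so the identifications produced agree up to homotopy; your account just packages them in fewer diagrammatic moves.
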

 
 \begin{proof}  One has a commutative diagram
 \[
 \xymatrix{
 K \ar[r]^f_{\sim} \ar[d]_s & P \ar[r]\ar@{=}[d] & P \cup_K CK\ar[d] \\
 \partial P \ar[r]\ar[d] & P \ar[r] \ar[d] & P\cup_{\partial P} C\partial P  \ar[d]^{\sim} \\
P^\xi \ar[r] & CP \ar[r] & CP \cup_{P^\xi} CP^\xi
\\}
\]
in which $CX$ denotes the cone on a space $X$. The 
rows and columns of the diagram form homotopy cofiber sequences (cf.~2.1; the null homotopies in this case are evident). The space in the upper right corner is contractible
so the map $P\cup_{\partial P} C\partial P \to CP \cup_{P^\xi} CP^\xi$ is a weak equivalence. The domain of this  map is identified with $P/\partial P$ up to a
preferred weak equivalence (given by collapsing $CP$ to a point) and the codomain is identified with $\Sigma P^{\xi}$ (an explicit identification is obtained using
the evident map $CP \to CP^\xi$ to get a weak equivalence $CP \cup_{P^\xi} CP^\xi \to 
 CP^\xi \cup_{P^\xi} CP^\xi = \Sigma P^\xi$). 
\end{proof}

In particular, the lemma gives a preferred isomorphism of singular
homology groups
$\tilde H_{k-1}(P^\xi) \cong H_k(P,\partial P)$.

\subsection{Homotopy codimension}
A  Poincar\'e space $P$ of dimension $n$ is said to have {\it homotopy codimension} 
$\ge j$ if the map $\partial P \to P$ is $(j-1)$-connected.

\begin{ex} Let $\eta\: S(\eta) \to Q$ be
a $(j-1)$-spherical fibration over a Poincar\'e space
without boundary. Then $D(\eta)$ has homotopy codimension $\ge j$.
\end{ex}

\begin{ex} If $P$ is a compact smooth manifold, possibly with boundary,
which admits a handle decomposition
whose handles all have index $\le k$, then the homotopy codimension of $P$
is $\ge n-k$.
\end{ex}

\begin{ex} \label{codim3}
Suppose $P$ is an $n$-dimensional Poincar\'e space, $n-k \ge 3$. Then
$P$ has homotopy codimension $\ge n-k$ if the map
$\partial P \to P$ is $2$-connected and $P$ has the weak homotopy
type of a CW complex of dimension $\le k$. This is a consequence of duality,  
the relative Hurewicz theorem and a result of Wall \cite[thm.~E]{Wall_finiteness}.
 \end{ex}

We will assume the following throughout the paper.

\begin{hypo}  The Poincar\'e space $P$  has homotopy codimension $\ge 3$.
\end{hypo}

\subsection{Poincar\'e embeddings} We recall
the notion of codimension zero Poincar\'e embedding
(see e.g., \cite{Klein_compress},\cite{GK1}). We will
restrict ourselves to the case when the ambient space is an $n$-disk.

A {\it Poincar\'e embedding} of a Poincar\'e space $P$
of dimension $n$ in $D^n$ consists of  a ``complement''
space $C$ equipped with a (gluing data) 
map 
\[
\partial P \amalg S^{n-1} \to C
\]
making $C$ into an $n$-dimensional Poincar\'e space with 
boundary $\partial P \amalg S^{n-1}$.
Furthermore, we require the homotopy pushout of
\[
P @<<< \partial P @>>> C
\]
to have weak homotopy type of $D^n$, i.e., it is required to be weakly contractible. 
The set of all such Poincar\'e embeddings comes equipped with
a topology (cf.\ \S\ref{sec:embeddings}). 

We denote this space by $E(P,D^n)$. 
We typically specify a Poincar\'e embedding  by writing its complement, i.e., we write
$C \in E(P,D^n)$.

\subsection{Unstable normal invariants} 
Assume $P$ is sectioned by $\xi$. 

\begin{defn} An {\it unstable normal invariant} for $P$ is a based map 
\[
\alpha\: S^{n-1} \to P^\xi 
\] 
such that 
\[
\alpha_\ast([S^{n-1}]) \in \tilde H_{n-1}(P^\xi) \cong H_n(P,\partial P)
\]
is a fundamental class for $P$.
\end{defn}

\begin{rem} The stable version of the normal invariant appeared in the 
context of surgery theory \cite{Novikov}. Applications of unstable normal invariants to embedding theory
were investigated in \cite{Williams79},
\cite{Richter} and \cite{Klein_compress}.
\end{rem}

\begin{prop} \label{prop:the-browder-construction} Assume $P$ is sectioned.
Then an unstable normal invariant for $P$ gives rise to a Poincar\'e embedding
of $P$ in $D^n$.
\end{prop}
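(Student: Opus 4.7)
The plan is to construct the Poincar\'e embedding directly from $\alpha$ via a homotopy pullback followed by a mapping-cylinder assembly, and then verify the defining properties using Mather's cube theorem together with Poincar\'e--Lefschetz duality.

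Exploiting the defining pushout $P^\xi = \partial P \cup_K CK$, first form the homotopy pullback
\[
C' \;:=\; S^{n-1} \times_{P^\xi}^h \partial P
\]
along $\alpha$, equipped with projections $p_1\colon C' \to \partial P$ and $p_2\colon C' \to S^{n-1}$. Define the candidate complement $C$ to be the double mapping cylinder of $\partial P \xleftarrow{p_1} C' \xrightarrow{p_2} S^{n-1}$; the canonical inclusions $\partial P, S^{n-1} \hookrightarrow C$ then supply the required gluing data $\partial P \amalg S^{n-1} \to C$.

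Next, verify the two defining conditions of a Poincar\'e embedding. For the weak contractibility of $P \cup_{\partial P} C$: rearranging homotopy colimits reduces this to the homotopy pushout of $K \leftarrow C' \to S^{n-1}$. Applying Mather's cube theorem to the defining square of $P^\xi$ pulled back along $\alpha$ (with applicability guaranteed by the homotopy codimension $\ge 3$ hypothesis via Blakers--Massey), the contractibility of $CK$ propagates through the cube to show that this pushout is weakly contractible. For Poincar\'e duality on $C$: the hypothesis that $\alpha_*[S^{n-1}]$ is a fundamental class in $\tilde H_{n-1}(P^\xi) \cong H_n(P,\partial P)$ supplies a candidate fundamental class $[C,\partial C] \in H_n(C, \partial P \amalg S^{n-1})$, and cap product with this class should induce the required duality isomorphisms, verified by combining the Poincar\'e structure of $(P, \partial P)$ with the trivial duality of $D^n = P \cup_{\partial P} C$ through the Mayer--Vietoris sequence.

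The main obstacle will be the duality verification. Extracting the candidate fundamental class is routine, but showing that cap product with it induces the requisite isomorphisms for $(C, \partial C)$ requires a careful diagram chase across the long exact sequences of $(P, \partial P)$, $(C, \partial C)$, and the contractible $(D^n, \partial D^n)$, with the degree-one normal invariant condition entering at the pivotal step. A related subtlety is that the naive double-mapping-cylinder recipe may require a small modification so the pushout $P \cup_{\partial P} C$ genuinely collapses to $D^n$ under the cube-theorem analysis (rather than, say, yielding an extraneous join-type summand), and this modification will itself draw on the normal-invariant hypothesis. A minor technical point is that the gluing maps $\partial P, S^{n-1} \hookrightarrow C$ must be cofibrations for $(C, \partial P \amalg S^{n-1})$ to define a genuine Poincar\'e pair; this is handled by standard cofibrant replacement.
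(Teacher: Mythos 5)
You take a fundamentally different route from the paper, and the route breaks at exactly the step you flagged as a ``subtlety.'' The paper's Browder construction involves no base change at all: it declares the complement to be $C := P^\xi$, with gluing datum the canonical map $\partial P \to P^\xi$ on one boundary summand and $\alpha$ itself on the other. Contractibility of $P \cup_{\partial P} C$ is then a one-line observation from the defining pushout $P^\xi = \partial P \cup_K CK$: namely $P \cup_{\partial P} P^\xi = P \cup_K CK$, which is contractible because $K \to P$ is a weak equivalence. The Poincar\'e duality of $(C,\partial P \amalg S^{n-1})$ is not verified by a Mayer--Vietoris chase but cited from \cite[lem.~2.3]{Klein_haef}, after noting $C$ is $1$-connected so only constant $\mathbb Z$-coefficients need checking.

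Your $C$ is not $P^\xi$, and the contractibility you need is genuinely false for it. Carrying out the very homotopy colimit rearrangement you propose: by the pasting lemma $P\cup_{\partial P} C \simeq \mathrm{hopo}(P \leftarrow C' \to S^{n-1})$, and Mather's cube theorem (which, incidentally, is unconditional here and does not require Blakers--Massey or the codimension hypothesis) gives $S^{n-1} \simeq \mathrm{hopo}(C' \leftarrow K' \to (CK)')$, where $K'$ and $(CK)'$ are the homotopy pullbacks of $K$ and $CK$ along $\alpha$. Since the composite $K \to \partial P \to P^\xi$ factors through the contractible cone $CK$, one has $(CK)' \simeq F := \mathrm{hofib}(\alpha)$ and $K' \simeq K \times F$, with both structure maps being projections. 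Substituting back and collapsing,
\[
P \cup_{\partial P} C \;\simeq\; \mathrm{hopo}\bigl(K \xleftarrow{\,\mathrm{pr}\,} K\times F \xrightarrow{\,\mathrm{pr}\,} F\bigr) \;\simeq\; K \ast F \;\simeq\; P \ast \mathrm{hofib}(\alpha).
\]
This is the ``extraneous join-type summand'' you anticipated, but it is not a defect of model; it is the whole answer, and it is contractible only when $P$ or $\mathrm{hofib}(\alpha)$ is, which fails already for $P = Q\times D^1$ with $Q$ a nontrivial closed Poincar\'e space. No modification of the double-mapping-cylinder recipe removes it. A secondary problem is that $C'$, being a homotopy pullback, is typically not homotopy finite, so your $C$ would not even qualify as a Poincar\'e pair. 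The correct move is to abandon the base-change picture entirely and use $\alpha$ directly as the gluing map $S^{n-1} \to P^\xi = C$, which is what the paper does.
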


\begin{proof} The proof harkens back 
to a construction of Browder \cite{Browder66}. Let $\xi$ be the sectioning data.
Set $C := P^\xi$.  
We have an evident map
\[
P^\xi \vee S^{n-1} \to C
\]
where on the second wedge summand we use $\alpha$. Let $\partial P \to P^\xi$ be the evident map. Consider the composite
\[
\partial P \amalg S^{n-1} \to P^\xi \vee S^{n-1} \to C\, .
\]
It follows from \cite[lem.~2.3]{Klein_haef} that this map gives $C$ the structure of a Poincar\'e space with boundary $\partial P \amalg S^{n-1}$ and defines a Poincar\'e embedding of $P$ in 
$D^n$. (Note: since $(P,\partial P)$ is $2$-connected
and $P \cup_{\partial P} C$ is $\infty$-connected, it follows that $C$ is
$1$-connected. So Poincar\'e duality for $C$ only needs to be verified with constant
$\Bbb Z$ coefficients.)
\end{proof}

\begin{notation} If $P$ is sectioned by $\xi$, then the space of its unstable normal invariants
will be denoted by
\[
\Omega^{n-1}_{\perp}P^\xi\, .
\]
This is to be topologized
as a subspace
of the $(n{-}1)$-fold based loop space $\Omega^{n-1} P^\xi$.
\end{notation}

\begin{rem} The subspace  $\Omega^{n-1}_{\perp}P^\xi\subset
\Omega^{n-1}P^\xi$ is a collection of connected components:
the Hurewicz map gives a (homotopy) cartesian square
\[
\xymatrix{
\Omega^{n-1}_{\perp}P^\xi \ar[r]^{\subset}
 \ar[d] & \Omega^{n-1}P^\xi\ar[d] \\
 H_{n}(P,\partial P)^{\times} \ar[r]_{\subset} & H_{n}(P,\partial P)
 }
 \]
 where $H_{n}(P,\partial P)^{\times}$ is the set of fundamental classes
 of $P$. This set is nonempty if and only if $P$
 is orientable. If $P$ is connected and orientable then $H_{n}(P,\partial P)^{\times}$ 
has
 precisely two elements.
\end{rem}

The proof of Proposition \ref{prop:the-browder-construction}
yields a map
\begin{equation} \label{eqn:the-browder-construction}
\Omega^{n-1}_{\perp}P^\xi \to E(P,D^n)
\end{equation}
which we henceforth call the {\it Browder construction}.

\subsection{The link; the first main result}
We continue to assume that $P$ is sectioned by $\xi = (K,f,s)$.
Given a point $C\in E(P,D^n)$, we have a weak map
\begin{equation}\label{eqn:weak-map}
P @<f<{}^{\sim} < K @> s >> \partial P \to C\, .
\end{equation}
Using the basepoint of $S^{n-1}$ we obtain a preferred basepoint for $C$.
Let
\begin{equation}\label{eqn:link-invariant}
\ell_0(C) \in [P_+,C]_\ast \cong [P,C]
\end{equation}
be the homotopy class of the weak map \eqref{eqn:weak-map}, where $P_+$ is $P$ with a disjoint basepoint. We call $\ell_0(C)$ the {\it link} of the Poincar\'e embedding. The next
example motivates the
terminology.

\begin{ex} Let $f\: M^p \to D^{2p+1}$ be a smooth framed embedding, where $M$ is connected.
This means that $f$ admits a preferred extension 
to a smooth embedding $F\: M^p \times D^{p+1} \to D^{2p+1}$.
By Alexander duality and the Hurewicz isomorphism we have a preferred isomorphism
$H_p(D^{2p+1}\setminus f(M^p)) \cong \Bbb Z$. Let $\ast \in S^p$ be the basepoint. Then
the homology class induced by
\[
M^p \times \ast @>F >> D^{2p+1}\setminus f(M^p)
\]
is the {\it self-linking number} of $f$.
\end{ex}

\begin{bigthm}\label{bigthm:browder-fibration} Assume $P$ is sectioned by $\xi$.
Given a Poincar\'e embedding $C\in E(P,D^n)$, then
the homotopy fiber of the Browder construction \eqref{eqn:the-browder-construction} taken at $C$ is non-empty if and only if the link
$\ell_0(C)$ is trivial.

Furthermore, if the link is trivial, then the Browder
construction sits in a homotopy fiber sequence
\[
F_\ast(\Sigma (P_+),C) \to \Omega^{n-1}_{\perp}P^\xi \to E(P,D^n)
\]
where the homotopy fiber is taken at $C$ and
$F_\ast(\Sigma (P_+),C)$ is the function space of based maps 
$\Sigma (P_+) \to C$.
\end{bigthm}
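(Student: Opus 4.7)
For the \emph{if} direction of the biconditional, note that every embedding $\mathrm{Br}(\alpha)$ in the image of the Browder construction has complement $P^\xi = \partial P \cup_s CK$, and its link is represented by the composite $K \xrightarrow{s} \partial P \hookrightarrow P^\xi$, which is canonically null-homotopic via the cone coordinate on $CK \subset P^\xi$. For the remaining assertions, I introduce an auxiliary space $\mathfrak{L} E(P, D^n)$ whose points are pairs $(C, \eta)$, where $C \in E(P, D^n)$ and $\eta$ is a based null-homotopy of $\ell_0(C) \colon P_+ \to C$. The forgetful map $\mathfrak{L} E(P, D^n) \to E(P, D^n)$ is a fibration with homotopy fiber at $C$ equal to the space of based null-homotopies of $\ell_0(C)$; this fiber is empty when $\ell_0(C) \neq 0$, and when non-empty is a torsor over $\Omega F_\ast(P_+, C) \simeq F_\ast(\Sigma P_+, C)$. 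We therefore obtain a homotopy fiber sequence
\[
F_\ast(\Sigma P_+, C) \longrightarrow \mathfrak{L} E(P, D^n) \longrightarrow E(P, D^n).
\]

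The Browder construction admits a tautological lift $\widetilde{\mathrm{Br}} \colon \Omega^{n-1}_\perp P^\xi \to \mathfrak{L} E(P, D^n)$ sending $\alpha$ to $(\mathrm{Br}(\alpha), \eta_\alpha)$, with $\eta_\alpha$ the canonical cone null-homotopy. The theorem reduces to showing $\widetilde{\mathrm{Br}}$ is a weak equivalence: this yields the \emph{only if} direction (every $C$ with trivial link has a pre-image in $\Omega^{n-1}_\perp P^\xi$) and transports the above fiber sequence to the one asserted.

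To show $\widetilde{\mathrm{Br}}$ is a weak equivalence I construct a homotopy inverse. Given $(C, \eta)$, the null-homotopy $\eta$ determines, via the universal property of the mapping cone $P^\xi = \partial P \cup_K CK$, an extension $\tilde\eta \colon P^\xi \to C$ of the $\partial P$-gluing $\phi$; the desired $\alpha$ is then obtained as a homotopy-lift of the $S^{n-1}$-gluing $\psi$ through $\tilde\eta$. The main obstacle---on which everything hinges---is proving that $\tilde\eta$ is always a weak equivalence, so that the lift of $\psi$ exists and is unique up to contractible choice, and the resulting $\alpha$ automatically satisfies the fundamental-class condition by transport along $\tilde\eta_\ast$. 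For this I use that both $P^\xi$ and $C$ are $1$-connected (the former by the homotopy codimension hypothesis, the latter as in the proof of Proposition \ref{prop:the-browder-construction}), so by Whitehead it suffices that $\tilde\eta$ induce an isomorphism on homology. The Poincar\'e pushout $P \cup_{\partial P} C \simeq \ast$ identifies the cofiber of $\phi$ with $\Sigma P$, which matches the cofiber $\Sigma K \simeq \Sigma P$ of $\partial P \hookrightarrow P^\xi$; a Poincar\'e-duality comparison of fundamental classes forces the induced self-map of $\Sigma P$ to be a homology equivalence, and the five-lemma applied to the associated cofiber sequences upgrades this to the desired equivalence $\tilde\eta$. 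Verifying that the assignments $\alpha \leftrightarrow (C,\eta)$ are mutually inverse up to coherent homotopy is then routine.
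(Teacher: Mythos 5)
Your decomposition mirrors the paper's: first identify the refined Browder construction $\Omega^{n-1}_\perp P^\xi \to \mathfrak LE(P,D^n)$ as a weak equivalence (this is Theorem B in the paper), then identify the homotopy fibers of $\mathfrak LE(P,D^n)\to E(P,D^n)$ with $F_\ast(\Sigma(P_+),C)$ and read off the asserted fiber sequence. The second half of your plan is sound and is essentially what the paper does, except that the paper obtains the fiber via the factorization-category lemma (\cite[prop.~2.14]{GK1} repackaged as Lemma~\ref{lem:helper_sequence} together with the coaction of the cofiber sequence $P_+\to A\to A'$); your ``torsor over $\Omega F_\ast(P_+,C)$'' description is the same observation in different clothing. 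Note also that the paper's $\mathfrak LE(P,D^n)$ is a realization of a category of spaces under $A'=P^\xi\vee S^{n-1}$, not literally a space of pairs $(C,\eta)$; these are equivalent models, but working with the categorical model is what makes your ``mutually inverse up to coherent homotopy is then routine'' legitimate --- with a bare point-set model of pairs, the coherence check is not routine.

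The real issue is your treatment of the first half. You correctly locate the crux: for any $C\in\mathfrak LE(P,D^n)$, the extension $\tilde\eta\colon P^\xi\to C$ over $\partial P$ produced by the nullhomotopy must be a weak equivalence. (The paper's proof of Theorem~\ref{bigthm:refined_browder-construction} treats this as part of the ``tautological'' inverse-functor step, i.e.\ as the statement that the full subcategories $\mathfrak L\mathcal E(P,D^n)$ and $wT_\perp(A'\to\ast;{\sim}P^\xi)$ of $wT(A'\to\ast)$ coincide; so the content you are trying to supply is exactly the content the paper compresses.) But the argument you offer for it does not quite work as written. After passing to cofibers of $\partial P\to P^\xi$ and $\partial P\to C$ you get a map between two spaces each weakly equivalent to $\Sigma P$, not a self-map of $\Sigma P$, so there is no ``induced self-map'' to which one could apply a degree argument without first proving it is compatible with the identifications. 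And invoking ``a Poincar\'e-duality comparison of fundamental classes'' is misplaced: $\Sigma P$ is not a Poincar\'e space, and even for Poincar\'e spaces a map sending fundamental class to fundamental class need not be a homology isomorphism without further input. The clean route is excision rather than duality: the homotopy pushout $P\cup_{\partial P}C\simeq\ast$ gives a preferred equivalence $\mathrm{hocofib}(\partial P\to C)\simeq\mathrm{hocofib}(P\to\ast)\simeq\Sigma P$, while $\mathrm{hocofib}(\partial P\to P^\xi)\simeq\Sigma K$; one then checks --- by tracking the two nullhomotopies, the one coming from $\eta$ and the tautological one $CP\to\ast$, through the relevant $3\times 3$ diagram --- that the resulting map $\Sigma K\to \Sigma P$ is homotopic to $\Sigma f$ for $f\colon K\xrightarrow{\simeq}P$ the sectioning equivalence, and is therefore a weak equivalence; the five lemma and $1$-connectivity of $P^\xi$ and $C$ then finish. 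Until that diagram chase is actually carried out, there is a gap in your argument at precisely the point you (rightly) identify as the place where everything hinges.
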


\begin{rems} \label{rem:tp}
(1). The null homotopy yielding the homotopy fiber sequence 
will made explicit in the proof of Theorem \ref{bigthm:browder-fibration}.
\smallskip
 
(2). Theorem \ref{bigthm:browder-fibration} answers
a question posed to me by Sylvain Cappell about how far the Browder
construction is from being a homotopy equivalence.
\smallskip

\noindent (3). Let $Q$ be a 
Poincar\'e space of dimension $n-1$ and homotopy codimension $\ge n-k \ge 4$, but
not necessarily sectioned. Then as in Example \eqref{ex:timesI},
$Q{\times} D^1$ is sectioned with generalized Thom space $Q/\partial Q$.

The ``decompression'' map
$E(Q,D^{n-1}) \to E(Q{\times} D^1,D^{n})$ 
(cf.~\eqref{eqn:decompression} below) factors as
\[
E(Q,D^{n-1}) @>>> \Omega^{n-1}_\perp Q/\partial Q @>>> E(Q{\times} D^1,D^n)\, ,
\]
in which the first map is given by the Pontryagin-Thom construction,
and the second is the one of Theorem \ref{bigthm:browder-fibration}.
The results of \cite{Klein_compress}  imply that the first map 
is $(2n-3k-6)$-connected.
\smallskip

\noindent (4). The map
$
F_\ast(\Sigma (P_+),C) \to \Omega^{n-1}_{\perp}P^\xi 
$
can be described as an orbit map of an ``action'' of
$\Omega F_\ast(P_+,P^\xi)$ on $\Omega^{n-1}_{\perp}P^\xi$: 
 fix an unstable normal invariant $\alpha \: S^{n-1} \to P^\xi$. Then the Browder construction applied to $\alpha$ gives a Poincar\'e embedding with complement $C = P^\xi$.  

Let $
c\: P^\xi \to P^\xi \vee \Sigma (P_+)
$
be the Barratt-Puppe coaction map
for the based cofiber sequence $P_+ \to (\partial P)_+ \to P^\xi$.
Given  $\phi \in F_\ast(\Sigma (P_+),P^\xi) = \Omega F_\ast(P_+,P^\xi) $, we obtain a new
normal invariant $\phi\star \alpha$ by taking the composition
\[
S^{n-1} @> \alpha >> P^\xi @> c >> P^\xi \vee \Sigma (P_+)
@> \text{id} + \phi >> P^\xi \, .
\]
Then the operation $\phi \mapsto \phi\star \alpha$ yields the desired description.
\end{rems}

\begin{ex}[Embeddings of the $n$-disk] Let $P = D^n$ where $n \ge 3$.
Then $P$ is sectioned by the basepoint of $S^{n-1}$.
In this case, Theorem \ref{bigthm:browder-fibration} gives
a homotopy fiber sequence
\[
\Omega S^{n-1} \to F_{n-1} \to E(D^n,D^n)\, ,
\]
where $F_{n-1}$ is the space of based self homotopy equivalences of $S^{n-1}$.
In fact,  $E(D^n,D^n) \simeq G_n$, the
unbased self homotopy equivalences of $S^n$.
Furthermore, the above homotopy fiber sequence is principal and with respect to the identifications is just a shift to the left of the evident fibration
$F_{n-1} \to G_n \to S^{n-1}$.
\end{ex}

\begin{ex}[Disjoint unions of $n$-disks]\label{ex:little-disks} Assume $n \ge 3$. 
Let $T$ be a finite set and let $P = D^n \times T$. Then $P$ is sectioned
by choosing a basepoint $\ast \in S^{n-1}$.
Theorem \ref{bigthm:browder-fibration} gives in this case a homotopy fiber
sequence
\begin{equation}\label{eqn:little-disks}
\Omega \prod_T  S^{n-1} \smsh T_+ \to 
\Omega^{n-1}_\perp (S^{n-1}\smsh T_+) \to  E(D^n {\times} T,D^n)\, . 
\end{equation}
For each $x \in T$ we have a projection map $p_x\: S^{n-1}\smsh T_+ \to S^{n-1}$.
The condition for a based map $f\: S^{n-1} \to S^{n-1} \smsh T_+$
to be a normal invariant is that every composite $p_x \circ f$ should
lie in $F_{n-1}$. 

Since $S^{n-1}\smsh T_+$ is a finite wedge of spheres, 
the Hilton-Milnor theorem implies that the homotopy groups of
the two spaces on the left of \eqref{eqn:little-disks} can be expressed 
explicitly in terms
of the homotopy groups of spheres. 

For example, if $n$ is even, then  these two spaces are rationally equivalent
to generalized Eilenberg-Mac~Lane spaces with finitely generated homotopy groups that
only occur even degrees. These rational homotopy
groups can be explicitly computed in terms of
a Hall basis for the free Lie algebra on $|T|$-generators (see e.g., \cite[thm.~4.7]{Boardman-Steer}). For parity reasons,
the long exact homotopy sequence of rational homotopy groups splits
into short exact sequences

\[
\resizebox{5in}{!}{
\xymatrix{
0 \ar[r]  & \pi_{2k+1}(E(D^n \times T,D^n))_{\Bbb Q} \ar[r] &
\oplus_T  \pi_{2k+1} (S^{n-1} \smsh T_+)_{\Bbb Q} 
 \ar `r `[l] `[dll] [dll] 
\\
\pi_{2k+n-1}(S^{n-1}\smsh T_+)_{\Bbb Q} \ar[r]  &
\pi_{2k}(E(D^n \times T,D^n))_{\Bbb Q} \ar[r] & 0\, ,
}
}
\]
where we are assuming $k > 0$ (this is not
a serious restriction: it can be shown that $\pi_0(E(D^n \times T,D^n))$
is a set of cardinality $2^{|T|}$ and $\pi_1(E(D^n \times T,D^n))$ is
the direct sum of $|T|$-copies of the cyclic group of order two).
From this we immediately obtain a crude bound for the rank of the rational homotopy
groups of $E(D^n \times T,D^n)$. 
To obtain finer information would
require explicit knowledge of the curved arrow in the diagram.
\end{ex}

\subsection{Unlinked embeddings; the second main result}
\begin{defn} Assume $P$ is sectioned.
The space of {\it unlinked embeddings} 
\[
\frak L E(P,D^n)
\]
consists of those points 
$C \in E(P,D^n)$ such that the gluing data
\[
\partial P \amalg S^{n-1} \to C
\]
comes equipped with a factorization
\[ \partial P \amalg S^{n-1} \to P^\xi \vee S^{n-1} \to 
C
\]
where the first map is evident.
\end{defn}

It is clear that the Browder construction \eqref{eqn:the-browder-construction}
factors as
\begin{equation}\label{eqn:factorized-browder-construction}
\Omega^{n-1}_{\perp}P^\xi \to  \frak L E(P,D^n) \to E(P,D^n) \, .
\end{equation}
We call the map 
\begin{equation} \label{eqn:refined-browder-construction}
\Omega^{n-1}_{\perp}P^\xi \to  \frak L E(P,D^n)
\end{equation}
the {\it refined Browder construction.} 

\begin{bigthm} \label{bigthm:refined_browder-construction} 
The refined Browder construction \eqref{eqn:refined-browder-construction}
is a homotopy equivalence.
\end{bigthm}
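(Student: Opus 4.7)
The plan is to construct a quasi-inverse to the refined Browder construction, exploiting the observation that the factorization data in an unlinked embedding forces the canonical map $P^\xi \to C$ to be a weak equivalence. Given a point of $\frak L E(P, D^n)$ presented by a complement $C$ together with a factorization $h \vee v \colon P^\xi \vee S^{n-1} \to C$ of the gluing, I first verify that $h \colon P^\xi \to C$ is a weak homotopy equivalence. Because the $\partial P$-gluing factors as $\partial P \hookrightarrow P^\xi \xrightarrow{h} C$, the Poincar\'e embedding condition $P \cup_{\partial P} C \simeq *$ may be rewritten as a two-step pushout
\[
P \cup_{\partial P} C \;\simeq\; (P \cup_{\partial P} P^\xi) \cup_{P^\xi} C \;\simeq\; C/P^\xi,
\]
using $P \cup_{\partial P} P^\xi \simeq P \cup_K CK \simeq *$, which holds because $f \colon K \to P$ is a homotopy equivalence. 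Thus $C/P^\xi \simeq *$ and $h$ is a weak equivalence.

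I then define a quasi-inverse at the level of points by sending $(C, h, v)$ to $\alpha := \tilde h \circ v \colon S^{n-1} \to P^\xi$, where $\tilde h$ is a chosen homotopy inverse to $h$. Tracing the Poincar\'e--Lefschetz fundamental class of $(C, \partial C)$ through the pushout identification $P \cup_{\partial P} C \simeq D^n$ and the preferred isomorphism $\tilde H_{n-1}(P^\xi) \cong H_n(P, \partial P)$ shows that $\alpha_\ast([S^{n-1}])$ is the fundamental class of $P$, so $\alpha$ lies in $\Omega^{n-1}_\perp P^\xi$. To upgrade this pointwise construction to a continuous map, I would replace $\frak L E(P, D^n)$ by the weakly equivalent space whose points include the supplementary datum of a homotopy inverse $\tilde h$ together with its coherence homotopies; forgetting this additional data is a weak equivalence because its fibers are the contractible spaces of homotopy-inverse data for a fixed weak equivalence. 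Both composites with the refined Browder construction are then homotopic to the identity: starting from $\alpha$, the Browder image $(P^\xi, \mathrm{id}, \alpha)$ returns $\mathrm{id}^{-1} \circ \alpha = \alpha$; starting from $(C, h, v)$, the weak equivalence $h$ exhibits a canonical equivalence between $(P^\xi, \mathrm{id}, \tilde h \circ v)$ and $(C, h, v)$.

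The main technical obstacle is the verification of the fundamental-class condition on $\alpha$, which requires carefully tracing Poincar\'e--Lefschetz duality for $(C, \partial C)$ through the pushout $P \cup_{\partial P} C \simeq D^n$ and reconciling the resulting class with the preferred desuspension $P^\xi$ of $P/\partial P$ furnished by the lemma of Section 1.2. The continuity rectification is standard once this calculation is in hand.
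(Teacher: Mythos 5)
Your proposal is essentially correct and isolates the real mathematical content, which the paper somewhat buries. Both you and the paper establish Theorem~\ref{bigthm:refined_browder-construction} by observing that for an unlinked embedding $C \in \frak L\cal E(P,D^n)$, the structure map $P^\xi \to C$ is automatically a weak equivalence and the $S^{n-1}$-factor then corresponds to an unstable normal invariant. The pushout computation you carry out --- $P \cup^h_{\partial P} C \simeq (P \cup^h_{\partial P} P^\xi) \cup^h_{P^\xi} C \simeq C/P^\xi$ using $P \cup^h_{\partial P} P^\xi \simeq P \cup_f CK \simeq \ast$ --- is exactly the justification the paper compresses to the phrase ``it is tautological that these functors are inverses.'' (To make the step $C/P^\xi \simeq \ast \Rightarrow h$ a weak equivalence airtight you should record that both $P^\xi$ and $C$ are simply connected, which follows from the standing hypothesis that $\partial P \to P$ is $2$-connected and from a van Kampen argument on $P \cup^h_{\partial P} C \simeq \ast$; the paper notes the analogous fact inside the proof of Proposition~\ref{prop:the-browder-construction}.)

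Where you diverge is in the passage from this pointwise observation to a homotopy equivalence of spaces. The paper works entirely with realizations of weak-equivalence categories: it identifies $\frak L\cal E(P,D^n)$ with the full subcategory $wT_\perp(A'\to\ast;{\sim}P^\xi)$ of $wT(A'\to\ast)$ and then invokes a right-fiber identification from \cite{GK1} to recognize its realization as $\Omega^{n-1}_\perp P^\xi$. You instead build a quasi-inverse at the level of spaces by passing to a thickened model of $\frak L E(P,D^n)$ whose points carry a choice of homotopy inverse for $h$, then contracting that extra data away. This is a more hands-on route and it does work, but it has two soft spots you should not leave at the level of a sketch: the continuity rectification via ``contractible spaces of homotopy-inverse data'' needs either an honest model (e.g.\ replacing the map by a fibration and using the contractibility of the space of lifts) or a citation, and the fundamental-class computation --- that $\alpha_\ast[S^{n-1}]$ is a fundamental class under the chain of identifications $\tilde H_{n-1}(P^\xi) \cong H_n(P,\partial P) \cong H_n(C,\partial C)$ --- should actually be written out rather than asserted, since it is precisely what puts the inverse map into the subspace $\Omega^{n-1}_\perp P^\xi$ rather than into all of $\Omega^{n-1}P^\xi$. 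The categorical approach in the paper sidesteps both of these at the cost of importing the machinery of \cite{GK1}; your approach is more elementary in its inputs but pushes the technicalities into point-set rectification.
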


\begin{rems} (1). Theorem \ref{bigthm:refined_browder-construction}  
is essentially a ``space-ification'' of a result Williams \cite[thm.~A(i)]{Williams79}
who considered a version of the map
\eqref{eqn:refined-browder-construction} on the level of path components. 
However, there are some minor differences:
\begin{itemize}
\item Williams restricts himself to sectioned Poincar\'e spaces of the type
appearing in
Example \ref{ex:spherical_fibration}.
\item Williams studies
Poincar\'e embeddings in the $n$-sphere rather than in the $n$-disk.
This distinction does not appear on the level of path components.
\item Williams equips his Poincar\'e spaces and their embeddings
with orientations. His unstable normal invariants are of degree one.
\end{itemize}
The disadvantage with the $n$-sphere is that Williams' result 
does not extend to higher homotopy groups without modifying the domain
of the refined Browder construction: the correct replacement is
the space of ``fiberwise unstable
normal invariants over $S^n$'' (cf.\ \cite{Klein_compress}).
Another disadvantage is that Williams 
has to work much harder than we do to prove his result.
\smallskip

\noindent (2). Let $Q$ be as in Remark \ref{rem:tp}(1).
Then Theorem \ref{bigthm:refined_browder-construction}
implies that the decompression map 
\[
E(Q,D^n) \to \frak LE(Q{\times} D^1,D^{n+1})
\]
is $(2n-3k-4)$-connected.
\end{rems}

\subsection{Poincar\'e Immersions} 
The space of (Poincar\'e) {\it immersions}
$I(P,D^n)$ is defined to be the homotopy colimit of the diagram
\[
E(P,D^n) \to E(P_1,D^{n+1}) \to E(P_2,D^{n+2})\to \cdots
\]
where $P_j = P \times D^j$. Note by construction 
\[
I(P,D^n) \simeq I(P_1,D^{n+1}) \simeq \cdots
\]
We view this as a reasonable definition, since
the analogous statement is valid in the smooth case.

We  will exhibit below a homotopy equivalence
\[
I(P,D^n) \simeq 
\Omega^{n}_{\perp} Q(P/\partial P)\, ,
\]
where the right-hand side denotes the space of {\it stable normal invariants} of $P$: this is the space whose points are stable maps $\alpha \: S^n \to P/\partial P$ such that
$\alpha_\ast([S^n])\in H_n(P,\partial P)$ is a fundamental class. We topologize
this as a subspace of $\Omega^{n}Q(P/\partial P) := \Omega^n\Omega^{\infty} \Sigma^\infty (P/\partial P)$.

\begin{rem}
Assuming  $I(P,D^n)$ is non-empty, then 
we will also exhibit (Lemma \ref{lem:smale-hirsch}) a homotopy equivalence
\[
I(P,D^n) \simeq F(P,G) \, ,
\]
where $G$ is the topological monoid of stable self-equivalences of the sphere.
The equivalence depends on choosing a basepoint in $I(P,D^n)$.
\end{rem}

\begin{rem} Assume $P$ is orientable.
The {\it Spivak normal fibration} of $P$ is an orientable stable spherical
fibration $\xi$ over $P$ that is equipped with a stable map 
$\alpha\: S^n \to P^\xi/(\partial P)^\xi$ representing a fundamental class
for $P$ by means of the Thom isomorphism.
The data $(\xi, \alpha)$ are well-defined up to contractible choice 
\cite{Spivak}, \cite{Klein_dualizing}.

Note that if $\xi$ is fiber homotopically trivial, then $\alpha$ amounts
to a stable normal invariant for $P$.
Furthermore, $I(P,D^n)$ is non-empty if and only the Spivak normal fibration
of $P$ is trivial.  The fiber homotopy triviality of $\xi$
is the Poincar\'e analog of stable parallelizability.
\end{rem}

\subsection{A tower for unlinked embeddings; the third main result}  
We introduce some notation. If $V$
is an orthogonal representation of a group $G$, we let $S^V$ be the
based $G$-sphere given by the one-point compactification of $V$.
If $V$ and $W$ are two orthogonal representations, then we write $V+W$ for the direct sum and $nV$ will denote the direct sum of $n$-copies of $V$.
If $W \subset V$ is an orthogonal sub-representation, then we let $V -W$
be its orthogonal complement. Let $1$ denote the trivial representation of rank one.
Let $\Sigma_j$ be the symmetric group on 
the standard basis for $\Bbb R^j$.  Then we obtain the {\it standard representation} 
of $\Sigma_j$ on $\Bbb R^j$. The diagonal gives an embedding of
the trivial representation $1$ inside the standard representation.
Let $V_j$ be denote its orthogonal complement. Call this the {\it reduced
standard representation;} it has rank $j-1$. For example, $V_2$ is the rank one
sign representation.

If $X$ is a based (cofibrant) space and $E$ is a (fibrant) spectrum, then
 we let $F_\ast(X,E)$ be the (stable) function spectrum whose $j$-th space is given by
 the based maps $X\to E_j$. If $X$ and $E$ are equipped with $G$-actions
 then $F_\ast(X,E)$ inherits a $G$-action by conjugation. In particular,
one can consider the homotopy orbit spectrum $F_\ast(X,E)_{hG}$ and its
associated infinite loop space $\Omega^\infty F_\ast(X,E)_{hG}$,
the latter which will be denoted by $F^{\st}_\ast(X,E)_{hG}$.

 Let $\cal W_j$ denote
 the $j$-coefficient spectrum of the identity functor
from based spaces to based spaces in the sense of the calculus of homotopy functors
\cite{Johnson-deriv}. In particular, $\cal W_j$ is a spectrum with $\Sigma_j$-action
which is unequivariantly weak equivalent to a wedge of $(j-1)!$ copies
of the $(1-j)$-sphere.

\begin{bigthm} \label{bigthm:EIpartial} 
Assume $P$ is sectioned and assume that the homotopy codimension of $P$ is $\ge n-k \ge 3$. 
Then there is a tower of fibrations
\[
\dots \to \frak L E_{j}(P,D^n) \to \frak L E_{j-1}(P,D^n) \to \cdots \to 
\frak L E_{1}(P,D^n)  
\]
equipped with compatible maps 
\[
\phi_j\: \frak L E(P,D^n) \to \frak L E_{j}(P,D^n)
\]
such that
\begin{itemize}
\item the map $\phi_j$ is $(2{-}n + (j{+}1)(n{-}k{-}2))$-connected; in particular, the 
induced map
\[
\frak L E(P,D^n) \to \lim_{j\to \infty} \frak L E_{j}(P,D^n)
\]
is a weak equivalence;
\item there is a preferred homotopy equivalence
 \[
 \frak L E_{1}(P,D^n) \simeq I(P,D^n)\, ;
 \]
\item if $j \ge 2$ and  $x\in \frak L E_{j-1}(P,D^n)$ is a point, then there
is an obstruction 
\[
\ell_{j-1}(x) \in  \pi_0(F_\ast(P^{\times j}_+,\cal W_j\smsh S^{(n-1)V_j+1})_{h\Sigma_j}) 
\]
which is trivial if and only of
the homotopy fiber of the map $\frak L E_{j}(P,D^n)\to \frak L E_{j-1}(P,D^n)$
at $x$ is non-empty;
 
\item if   $\ell_{j-1}(x)$ is trivial,
then there is a homotopy fiber sequence
\[
F^{\st}_\ast((P^{\times j})_+,\cal W_j \smsh S^{(n-1)V_j})_{h\Sigma_j} 
\to \frak L E_{j}(P,D^n)\to \frak L E_{j-1}(P,D^n)\, .
\]
where the displayed homotopy fiber is taken at $x$.
\end{itemize}
\end{bigthm}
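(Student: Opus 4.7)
By Theorem \ref{bigthm:refined_browder-construction}, the refined Browder construction gives a homotopy equivalence $\frak LE(P,D^n) \simeq \Omega^{n-1}_\perp P^\xi$. The plan is to apply $\Omega^{n-1}_\perp$ to Goodwillie's Taylor tower of the identity functor on based spaces, evaluated at $P^\xi$. Concretely, set
$$\frak LE_j(P,D^n) := \Omega^{n-1}_\perp P_j\text{Id}(P^\xi),$$
and let $\phi_j$ be induced by the canonical map $P^\xi \to P_j\text{Id}(P^\xi)$. The $\perp$-condition is automatically preserved along the tower because it is detected by the projection $P_j\text{Id}(P^\xi) \to P_1\text{Id}(P^\xi) = Q(P^\xi)$.

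Because $P_1\text{Id} = Q$ and $\Sigma P^\xi \simeq P/\partial P$, the first stage computes as
$$\frak LE_1(P,D^n) = \Omega^{n-1}_\perp Q(P^\xi) \simeq \Omega^n_\perp Q(P/\partial P) \simeq I(P,D^n).$$
The hypothesis on homotopy codimension forces $P^\xi$ to be $c$-connected with $c = n-k-2$. Goodwillie's convergence estimate for the identity functor yields that $X \to P_j\text{Id}(X)$ is $((j+1)c + 1)$-connected on $c$-connected $X$; applying $\Omega^{n-1}$ drops connectivity by $n-1$, giving the claimed $((j+1)(n-k-2) + 2 - n)$-connectivity of $\phi_j$ and hence strong convergence.

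For the layers, recall that the $j$-th layer of Goodwillie's identity tower is $D_j\text{Id}(P^\xi) = \Omega^\infty (\cal W_j \smsh (P^\xi)^{\smsh j})_{h\Sigma_j}$, so after applying $\Omega^{n-1}$ the $j$-th layer of our tower is $\Omega^\infty$ of the spectrum $(\cal W_j \smsh (P^\xi)^{\smsh j} \smsh S^{-(n-1)})_{h\Sigma_j}$. Two $\Sigma_j$-equivariant identifications convert this to the required form. First, since $(S^1)^{\smsh j}$ with permutation $\Sigma_j$-action is the one-point compactification of the regular representation $1+V_j$, the equivalence $\Sigma P^\xi \simeq P/\partial P$ refines to a $\Sigma_j$-equivariant stable equivalence $(P^\xi)^{\smsh j} \simeq (P/\partial P)^{\smsh j} \smsh S^{-1-V_j}$. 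Second, $\Sigma_j$-equivariant Atiyah/Poincar\'e duality applied to the diagonally acted Poincar\'e space $P^{\times j}$ (whose equivariant formal dimension is $n(1+V_j)$, as visible from the $\Sigma_j$-action on the tangent representation at a diagonal fixed point) yields
$$F_\ast((P^{\times j})_+, E) \simeq (P/\partial P)^{\smsh j} \smsh S^{-n-nV_j} \smsh E$$
as $\Sigma_j$-spectra. Setting $E = \cal W_j \smsh S^{(n-1)V_j}$, both $\cal W_j \smsh (P^\xi)^{\smsh j} \smsh S^{-(n-1)}$ and $F_\ast((P^{\times j})_+, \cal W_j \smsh S^{(n-1)V_j})$ reduce to $\cal W_j \smsh (P/\partial P)^{\smsh j} \smsh S^{-n-V_j}$; passing to homotopy orbits and $\Omega^\infty$ produces the stated homotopy fiber sequence. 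The obstruction $\ell_{j-1}(x)$ is the image of $x$ under the $k$-invariant classifying this principal fibration, which lands in $\pi_0$ of the one-fold deloop of the layer; replacing $S^{(n-1)V_j}$ by $S^{(n-1)V_j+1}$ in the identification above, this $\pi_0$ is precisely $\pi_0(F_\ast((P^{\times j})_+, \cal W_j \smsh S^{(n-1)V_j+1})_{h\Sigma_j})$.

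I expect the chief technical difficulty to lie in the $\Sigma_j$-equivariant Atiyah/Poincar\'e duality: while the non-equivariant version is classical, the equivariant refinement requires setting up a $\Sigma_j$-equivariant Spivak theory for the diagonally-acted Poincar\'e space $P^{\times j}$ and carefully identifying the equivariant formal dimension with $n(1+V_j)$. When $P$ has non-trivial Spivak normal fibration the whole tower is vacuous (since $\frak LE_1 \simeq I(P,D^n) = \emptyset$), so one may reduce to the stably parallelizable case, in which the duality is essentially bookkeeping of $\Sigma_j$-actions on sphere coordinates; the remaining effort goes into verifying naturality of these identifications at the level of $\Sigma_j$-spectra so that homotopy orbits commute with the duality pairing in the required way.
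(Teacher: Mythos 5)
Your approach matches the paper's: define $\frak L E_j$ by pulling back the Goodwillie tower of the identity at $P^\xi$ against $\Omega^{n-1}_\perp$, identify the first stage with $I(P,D^n)$ via Lemma~\ref{lem:immersion-stable-normal-invt}, get strong convergence from Goodwillie's estimate and the $(n-k-2)$-connectivity of $P^\xi$, and identify the layers via Spanier--Whitehead duality applied $j$-fold. The obstruction is read off from the principal fibration structure $P_j\Bbb I\to P_{j-1}\Bbb I$, exactly as in \cite[lem.~2.2]{Good_calc3}.

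The one place where your sketch over-complicates is the equivariant duality, which you flag as the ``chief technical difficulty'' requiring a $\Sigma_j$-equivariant Spivak theory for the diagonal action on $P^{\times j}$. No such theory is needed. The paper takes a single \emph{non-equivariant} $(n-1)$-duality $\Sigma^\infty P^\xi\simeq F_\ast(P_+,\Sigma^\infty S^{n-1})$ (valid since $I(P,D^n)\ne\emptyset$), and observes that its $j$-fold external smash
\[
\Sigma^\infty (P^\xi)^{[j]}\ \simeq\ F_\ast\bigl((P^{\times j})_+,\Sigma^\infty S^{(n-1)(V_j+1)}\bigr)
\]
is \emph{automatically} an equivalence of spectra with $\Sigma_j$-action, because it is the smash of one fixed map with itself and both sides carry the visible permutation action. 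There is no equivariant formal dimension to identify and nothing extra to check about homotopy orbits commuting with duality; the symmetry is built in. Your detour through $(P/\partial P)^{\smsh j}\smsh S^{-1-V_j}$ and an ``equivariant Atiyah duality'' for $P^{\times j}$ is correct bookkeeping but works harder than necessary. Doing the duality directly on $P^\xi$, as the paper does, shortens the argument and makes the obstruction class land in $\pi_0(F_\ast(P^{\times j}_+,\cal W_j\smsh S^{(n-1)V_j+1})_{h\Sigma_j})$ with no further adjustment.
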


\begin{rems} (1). The first part of  Theorem \ref{bigthm:EIpartial} 
implies that if $(j+1)k+2j \le jn$ and $\frak L E_j(P,D^n)$ is non-empty, then
$\frak L E(P,D^n)$ is also non-empty. 
\smallskip

\noindent (2).
It follows from
the last two parts of the theorem that the map
$\frak L E_{j}(P,D^n)\to \frak L E_{j-1}(P,D^n)$
is $(2{-}n + j(n{-}k{-}2))$-connected. 
\smallskip

\noindent (3).
Modulo torsion, a transfer argument shows that the class
 $\ell_{j-1}(x)$ is detected in the singular cohomology group
$H^{s}(P^{\times j};\Bbb Q^{(j-1)!})$, where $s = (n-2)(j-1)+1$. 
\smallskip

\noindent (4). The layers of the tower depend only on the homotopy type
of $P$ and in particular do not depend on the choice of sectioning data.
\end{rems}

\begin{out} The material of \S\ref{sec:prelim} is mostly language. The literate reader
can skip it and refer back to it as needed. In \S\ref{sec:embeddings} we provide
constructions of the Poincar\'e embedding space, its unlinked variant
and the space of Poincar\'e immersions. 
We prove Theorems \ref{bigthm:browder-fibration} and \ref{bigthm:refined_browder-construction} in \S\ref{sec:proofs}. In \S\ref{sec:proof-EIpartial} we prove
Theorem \ref{bigthm:EIpartial}. The final section, \S\ref{sec:relationship},
is conjectural: it poses a connection between the tower
of Theorem \ref{bigthm:EIpartial} and the Goodwillie-Weiss
tower for smooth embeddings.
\end{out}

\begin{acks} I learned about unstable normal invariants many years ago
in discussions with Bill Richter and Bruce Williams. 
Most of the research for this paper
was done while I visited the Mathematics Institute at
the University of Copenhagen in 
the Summer of 2014. 
\end{acks}

\section{Preliminaries \label{sec:prelim}}

\subsection{Spaces}
 Our ground category is $T$, the category of compactly generated weak Hausdorff spaces.
 A non-empty space $X$ is  {\it $r$-connected} if $\pi_j(X,x)$ is trivial
for $j \le r$, for all base points $x\in X$. The empty space is $(-2)$-connected
and every non-empty space is $(-1)$-connected. A map $X\to Y$ of non-empty spaces
is $r$-connected if each of its homotopy fibers is $(r-1)$-connected (every
map of non-empty spaces is $(-1)$-connected;
a weak homotopy equivalence is an $\infty$-connected map.

For unbased spaces $X$ and $Y$
we let $F(X,Y)$ for the unbased function space and if $X$ and $Y$ are based
we let $F_\ast(X,Y)$ be the 
 based function space. When we write $[X,Y]$, we mean homotopy classes of based maps
 $X^\text{c}\to Y$, where $X^\text{c}$ is a cofibrant replacement for $X$.
 When $X$ and $Y$ are based, then the based homotopy classes are to be written
 as $[X,Y]_\ast$. We use the usual notation for the smash product: $X \smsh Y$, and
 the iterated smash product of $j$-copies of $X$ is denoted $X^{[j]}$.

We equip $T$ with the  Quillen model category structure given by the Serre fibrations, Serre cofibrations and
 weak homotopy equivalences \cite{Quillen}, \cite[th.~2.4.23]{Hovey}. 
Note that $T$ is enriched over itself. We let $T_\ast$ denote the model category of based spaces. 

 A commutative square of spaces
 \[
 \xymatrix{
 X_{\emptyset} \ar[r] \ar[d] & X_2 \ar[d]\\
 X_1 \ar[r] & X_{12}
 }
 \]
 is {\it homotopy cocartesian} if the map 
 \[
 \text{hocolim}(X_1 \leftarrow X_{\emptyset}\to  X_{2}) \to X_{12}) \to X_{12}
 \]
 is  a weak equivalence, where the domain of this map
 is given by the homotopy pushout of the diagram obtained from the square by removing its terminal vertex. 
 In the special case when $X_2$ is contractible, we 
 abuse notation and refer to $X_\emptyset \to X_1 \to X_{12}$ as a {\it homotopy
 cofiber sequence.} The is the same
as equipping the composition
  $X_\emptyset \to X_1 \to X_{12}$  with a preferred choice of  null homotopy such that 
  the induced map $X_2 \cup_{X_\emptyset} CX_\emptyset \to X_{12}$ is required
 to be a
 weak equivalence. 
  
 Similarly, the above square is {\it homotopy cartesian} if the map from 
 $X_\emptyset$ to the homotopy pullback of $X_1 \to X_{12} \leftarrow X_2$ is
 a weak equivalence. When $X_2$ is contractible, we refer to $X_\emptyset \to X_1 \to X_{12}$ as a {\it homotopy fiber sequence}. The latter is equivalent to 
 describing a null homotopy of the composition $X_\emptyset \to X_1 \to X_{12}$ 
 such that the map from $X_\emptyset$ to the homotopy fiber of the map $X_1 \to X_{12}$
 is a weak equivalence.
 
 In each of these notions, when the null homotopy is understood, we typically omit it from the notation to avoid clutter.

\subsection{Factorization categories}
Fix a map of spaces $f: A\to B$. Define a category 
\[
T(f) = T(f\: A\to B)
\]
whose objects are spaces $X$ and a factorization $A\to X \to B$ by
continuous maps. A morphism $X\to X'$ is a map of spaces that is
compatible with the factorizations. When $f$ is understood,
we usually write this category as $T(A\to B)$.

Here are some important special cases:

\begin{ex} Let $A = B$ and use the identity map. Then
$T(B\to B)$ is the category of spaces which contain $B$ as a retract.
\end{ex}

\begin{ex} Let $A= \emptyset$ be the empty space. Then $T(\emptyset \to B)$ is the 
category of spaces over $B$.
\end{ex}

\begin{ex} Let $B = \ast$ be the one-point space. Then $T(A \to \ast)$
is the category of spaces under $A$.
\end{ex}

The forgetful functor $T(A \to B) \to T$ induces a  model structure on 
$T(A \to B)$ by declaring a morphism to be a cofibration, fibration  or weak equivalence if and only if it is one in $T$ \cite[2.8, prop.~6]{Quillen}. This
model structure enriched over $T$. 
The category of weak equivalences is denoted by
\[
wT(A \to B)\, .
\]

\begin{rem} We use the notation 
$|\cal C|$ for the {\it realization} (of the nerve) of a small category $\cal C$.
The functor $\cal C \mapsto |\cal C|$ enables one
to transfer homotopical properties of spaces over to small 
categories. For example, we
 declare a functor $f\: \cal C\to \cal D$ to be $r$-connected if and only if
it is so upon taking realization. Likewise, it makes
sense to ask whether a commutative square of small categories is homotopy cartesian.

In this paper the 
categories $\cal C$ that we will want to apply realization to
are full subcategories of $wT(A\to B)$---but they are not small.
This is not a major dilemma:
for a discussion of the options
on how to deal with the matter, see \cite[p.~766]{GK1}.
\end{rem}

\subsection{Spectra}  
The spectra appearing in this paper are formed from objects of $T_\ast$. 
For us, a spectrum will be a sequence of based spaces $E_j$ and (structure) maps $\Sigma E_j \to E_{j+1}$.
We say that $E$ is {\it cofibrant} if each of the spaces $E_j$ is cofibrant
and each structure map is a cofibration.  $E$ is {\it fibrant} if 
each adjoint $E_j\to \Omega E_{j+1}$ is a weak equivalence.

A map of spectra $f\: E\to E'$ is a collection of maps $f_j\: E_j \to E'_j$ that
are compatible with the structure maps. Any spectrum $E$ has a {\it fibrant replacement}, which is a spectrum $E^{\text{\rm f}}$ 
equipped with a natural map of spectra $E \to E^{\text{f}}$, where 
$E^{\text{f}}_j := \colim_k \Omega^k E_{j+k}$.  The map $f\: E \to E'$ is a {\it (stable) weak equivalence}
if the associated map $E^{\text{f}}\to (E')^{\text{f}}$ is such that 
for each $j$ the map of based spaces
 $E_j^{\text{f}}\to (E')^{\text{f}}_j $ is a weak equivalence. If $E$ is
a  spectrum, we write $\Omega^\infty E$ for the associated infinite loop space
given by the zeroth space of its fibrant replacement. If $X$ is a based space,
then we let $\Sigma^\infty X$ be its suspension spectrum whose $j$-th space
is $S^j \smsh X$. For it to have the correct homotopy type we should assume that
$X$ is cofibrant. The zeroth space of $\Sigma^\infty X$ is denoted $Q(X)$.

Given a based space $X$ and a spectrum $E$ we can form $X \smsh E$ which is the spectrum
whose $j$-th space is $X\smsh E_j$. This has the correct homotopy type if both
 $X$ and $E$ are cofibrant.
Similarly we can form the functions
$F_\ast(X,E)$ which is the spectrum whose $j$-th space is $F_\ast(X,E_j)$.
This has the correct homotopy type when $X$ is cofibrant and $E$ is fibrant (when
$E$ fails to be fibrant, we will implicitly replace it by its fibrant model).
The associated {\it stable function space} is $\Omega^\infty F_\ast(X,E)$. We 
will typically be sloppy and omit the $\Omega^\infty$ from the notation.
Thus, $F_\ast(X,E)$ can mean either the spectrum or its associated infinite loop space.
If $X$ is unbased then we set $F(X,E) = F_\ast(X_+,E)$ where $X_+ = X \amalg \ast$.
If $X$ and $Y$ are based spaces, then a {\it stable map} $X \to Y$
is an element of the stable function space $F_\ast(X,\Sigma^\infty Y)$, i.e.,
a point of the function space $F_\ast(X,Q(Y))$, where $Q(Y) = 
\Omega^\infty\Sigma^\infty Y$.
We let $\{X,Y\}_\ast$ denote the stable homotopy classes of maps from $X$ to $Y$; this
is the same as $\pi_0(F_\ast(X,Q(Y))$ when $X$ and $Y$ are cofibrant.

Smash products of spectra are barely used in this paper,
and are confined to the proof of Theorem \ref{bigthm:EIpartial}. 
It is for this reason that we are content to work in the above  category of spectra.
The reader is free to use a more modern approach.

\subsection{Spectra with group action} Fix a  discrete group $G$. 
We say that a spectrum $E$ has a $G$-action
if each
$E_j$ has the structure of a based $G$-space and each structure
map $\Sigma E_j \to E_{j+1}$ is equivariant, where $G$ acts
trivially on the suspension coordinate. A map $E\to E'$ of spectra
with $G$-action is just a map of underlying spectra which is $G$-equivariant.
A map of spectra is a {\it weak equivalence} if it is when considered as a map
of spectra without action.
We say that $E$ is fibrant
if its underlying spectrum (without action) is.  Call a
based $G$-space $X$ {\it $G$-cofibrant} if it is built up from the basepoint by 
attaching free $G$-cells along equivariant maps;
a free $G$-cell has the form $D^n {\times} G$.

If $X$ is an $G$-space and $E$ is a spectrum with $G$-action, then $G$ acts diagonally
on $X\smsh E$. We write $X\smsh_G E$ for the orbit spectrum.  This has the correct
homotopy type if $X$ and $E$ are both $G$-cofibrant.
The {\it homotopy orbits} of $G$ acting on $E$ is the spectrum
\[
E_{hG} = EG_+ \smsh_G E\, 
\]
where $EG$ the universal contractible $G$-space.
This has the correct homotopy type of 
the underlying spectrum of $E$ is cofibrant.

\subsection{Poincar\'e spaces}
The Poincar\'e spaces of this paper are orientable.
An orientable {\it Poincar\'e space} of dimension $d$ consists
of a homotopy finite space $P$ for which there exists 
a fundamental class $[P] \in H_d(P;\Bbb Z)$ such that
the cap product
\[
\cap [P] \: H^*(P;\cal M) \to H_{d-\ast}(P;\cal M)
\]
is an isomorphism in all degrees for any locally constant sheaf $\cal M$.

If $\pi\: \tilde P \to P$ is a choice
of universal cover, then the cap product is an isomorphism for all
$\cal M$  if and only if
it is an isomorphism for the locally constant sheaf $\Lambda$
whose stalk at $x\in P$ is given by the free abelian group
with basis $\pi^{-1}(x)$ (cf.\ \cite[lem.~1.1]{Wall_poincare}).

Poincar\'e spaces $P$ with boundary $\partial P$, also known as Poincar\'e  pairs,
are defined similarly, where now
$[P] \in H_d(P,\partial P;\Bbb Z)$,
the cap product
\[
H^\ast(P;\cal M) @>\cap [P] >> H_{d-\ast}(P,\partial P; \cal M)
\]
is an isomorphism, and the class
$[\partial P]
\in  H_{d-1}(\partial P;\Bbb Z)$, obtained by applying the boundary
homorphism to $[P]$, equips $\partial P$ the structure of a Poincar\'e space of 
dimension $d-1$
(this assumes in particular that $\partial P$ is homotopy finite). 
We will be relaxed about language and refer to 
a Poincar\'e space with or without boundary simply as a Poincar\'e space.

We will also sometimes omit the condition that
the map $\partial P \to P$ is an inclusion. The definition of a
Poincar\'e space still makes sense in this instance since we
can replace any map by its mapping cylinder inclusion.

\section{Poincar\'e embeddings \label{sec:embeddings}}

Let $P$ be a Poincar\'e space of dimension $n$. We will assume
here that $\partial P \to P$ is a cofibration.
An  {\it (interior) Poincar\'e embedding} of $P$ in $D^n$ consists of a space $C$
and a map $\partial P \amalg S^{n-1} \to C$ such that 
\begin{itemize}
\item $C$ is a Poincar\'e space with boundary $\partial P \amalg S^{n-1}$;
\item  the amalgamated union
\[
P \cup_{\partial P} C 
\]
is weakly contractible.
\end{itemize}

In what follows we set
\[
A := \partial P \amalg S^{n-1}\, .
\]

Then 
\[
C\in wT(A\to \ast)
\]
is an object. Let
\[
\cal E(P,D^n) \subset wT(A\to \ast)
\]
be the full subcategory whose objects
give Poincar\'e embeddings of $P$ in $D^n$. 
The space of Poincar\'e embeddings of $P$ in $D^n$ is then defined as
the realization
\[
E(P,D^n) = |\cal E(P,D^n) |\, .
\]
This is an open and closed subspace of $|wT(A\to \ast )|$.

\begin{rem} The version of the Poincar\'e embedding space appearing here
is slightly different from the one
in \cite[defn.~2.8]{GK1}. There it is defined to be the homotopy fiber
of the functor
\begin{equation}\label{eqn:interior}
\cal I(A) \to \cal I(S^{n-1})
\end{equation}
given by ``gluing in $P$,'' where for a Poincar\'e space $\partial$ without boundary,
the category $\cal I(\partial)$ has objects Poincar\'e spaces $X$ with $\partial$ as boundary,
and morphisms are weak homotopy equivalences $X \to X'$ which restrict to the identity on 
$\partial$. In the definition of  \cite[defn.~2.8]{GK1}, the homotopy fiber
of \eqref{eqn:interior} is taken at $D^n \in \cal I(S^{n-1})$.
Our definition here amounts to taking a certain open and closed subspace
of $\cal I(A)$ rather than a homotopy fiber. This definition is equivalent to
the one in \cite{GK1} because the component of $D^n \in \cal I(S^{n-1})$
is contractible.
\end{rem}

The {\it decompression functor}
\[
\cal E(P,D^n) \to \cal E(P{\times} D^1,D^{n+1})
\]
is defined by mapping $C\in \cal E(P,D^n)$ to its undreduced suspension $SC$.
On realizations it defines the {\it decompression map} 
\begin{equation} \label{eqn:decompression}
E(P,D^n) \to E(P{\times} D^1,D^{n+1})\, .
\end{equation}

\subsection{Unlinked embeddings}
If $P$ is sectioned, then we set
\[
A' = P^\xi \vee S^{n-1}
\]
There is then a cofiber sequence $P_+ \to A \to A'$.
The map $A \to A'$ induces a (forgetful) functor
\[
wT(A' \to \ast) \to wT(A \to \ast)\, .
\]

\begin{defn}
The space of {\it unlinked embeddings} 
$\frak L E(P,D^n)$ is the realization of the
 full subcategory
\[
\frak L\cal E(P,D^n) \subset wT(A'\to \ast)
\]
consisting of objects $C$ which become Poincar\'e embeddings
when considered in  $wT(A \to \ast)$. 
\end{defn}

Unraveling the definition, we see that an unlinked 
embedding consists of a space $C$ and a map $P^\xi \vee S^{n-1} \to C$
such that the  composition
\[
\partial P \amalg S^{n-1} \to P^\xi \vee S^{n-1} \to C
\]
defines a Poincar\'e embedding of $P$ in $D^n$.

By definition, there is a homotopy cartesian square
\begin{equation} \label{eqn:e'e-pullback}
\xymatrix{
\frak L \cal E(P,D^n) \ar[r]\ar[d] & wT(A'\to \ast) \ar[d] \\
\cal E(P,D^n) \ar[r] & wT(A\to \ast) \, .
}
\end{equation}

\subsection{Poincar\'e Immersions}
Recall $P_j := P \times D^j$. The
{\it immersion space} $I(P,D^n)$
is defined as the homotopy colimit of the sequence of decompression maps
\[
E(P,D^n) \to E(P_1,D^{n+1})  \to E(P_2,D^{n+2}) \cdots\, .
\]
 \begin{lem} \label{lem:immersion-stable-normal-invt} There is a homotopy equivalence
 \[
 I(P,D^n) \simeq \Omega^{n}_{\perp} Q(P/\partial P) \, .
 \]
 \end{lem}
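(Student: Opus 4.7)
The plan is to factor the chain of decompression maps through spaces of unstable normal invariants and then invoke a cofinality argument for sequential homotopy colimits.

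First, I apply Remark \ref{rem:tp}(3) iteratively. For each $j \ge 0$, taking ``$Q$'' in that remark to be $P_j$ (a Poincar\'e space of dimension $n+j$) yields a factorization of the $j$-th decompression map,
\[
E(P_j, D^{n+j}) \xrightarrow{PT_j} \Omega^{n+j}_\perp(P_j/\partial P_j) \xrightarrow{Br_{j+1}} E(P_{j+1}, D^{n+j+1}),
\]
where $PT_j$ is the Pontryagin--Thom construction and $Br_{j+1}$ is the Browder map \eqref{eqn:the-browder-construction} associated to the sectioning of $P_{j+1} = P_j \times D^1$ provided by Example \ref{ex:timesI}. Using the preferred identification
\[
P_j/\partial P_j \cong (P/\partial P) \smsh (D^j/\partial D^j) \cong \Sigma^j(P/\partial P),
\]
the middle term may be rewritten as $\Omega^{n+j}_\perp \Sigma^j(P/\partial P)$.

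Second, these factorizations assemble into an interleaved sequence
\[
E(P_0, D^n) \to \Omega^n_\perp(P/\partial P) \to E(P_1, D^{n+1}) \to \Omega^{n+1}_\perp \Sigma(P/\partial P) \to E(P_2,D^{n+2}) \to \cdots
\]
in which every composition of two successive arrows is the corresponding decompression map (by construction) or the stabilization map (by checking the composite $PT_j \circ Br_j$ unwinds to the adjoint of suspension). Any two cofinal subsystems of a sequential diagram have canonically equivalent homotopy colimits, so the $\hocolim$ of the $E$-subsystem---which is $I(P, D^n)$ by definition---agrees with the $\hocolim$ of the $\Omega$-subsystem. Commuting $\Omega^n$ past the latter gives
\[
\hocolim_j \Omega^{n+j}_\perp \Sigma^j(P/\partial P) \simeq \Omega^n_\perp \hocolim_j \Omega^j \Sigma^j(P/\partial P) \simeq \Omega^n_\perp Q(P/\partial P),
\]
which is the desired identification.

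The main point to verify is that $\Omega^n$ commutes with the sequential homotopy colimit on the right and that the ``$\perp$'' decoration is compatible with stabilization; both are standard, the latter because each stabilization map $\Omega^j\Sigma^j(P/\partial P) \to \Omega^{j+1}\Sigma^{j+1}(P/\partial P)$ intertwines the Hurewicz maps to $H_n(P,\partial P)$, so the set of fundamental-class components is preserved. A secondary bookkeeping concern is that Remark \ref{rem:tp}(3) is stated with a stronger codimension hypothesis in order to obtain its connectivity estimate, but the factorization itself exists naturally under only the standing assumption that $P$ has homotopy codimension $\ge 3$.
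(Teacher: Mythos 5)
Your argument is essentially the paper's own proof: factor each decompression map through $\Omega^{n+j}_\perp(P_j/\partial P_j)$ using the Pontryagin--Thom and Browder constructions (exactly as in Remark~\ref{rem:tp}(3)), interleave the two sequences, and invoke cofinality of the two subsystems. The only additional detail you supply beyond the paper's terse proof is the explicit check that the composites $PT_j \circ Br_j$ give the stabilization maps and that the ``$\perp$'' decoration survives the colimit, both of which the paper leaves implicit but which are indeed the points one would want to verify.
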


\begin{proof}  The Browder construction gives a factorization of the
filtration defining $I(P,D^n)$ as
{\small\[
 E(P,D^n) \to \Omega^n_{\perp} P/\partial P \to E(P_1,D^{n+1}) \to 
\Omega^{n+1}_\perp P_1/\partial P_1 \to E(P_2,D^{n+2}) \to \dots
\]}
The homotopy colimit of the odd terms appearing in the sequence yields $I(P,D^n)$ by 
definition, whereas the homotopy colimit of the even terms  gives the space of stable
normal invariants
\[
\Omega^n_{\perp} Q(P/\partial P)\, ,
\]
since $\Omega^{n+j} (P_j/\partial P_j) = \Omega^{n+j}\Sigma^{j}(P/\partial P)$.
\end{proof}

\begin{lem}[Smale-Hirsch for Poincar\'e Spaces] \label{lem:smale-hirsch}
If $I(P,D^n)$ is non-empty, then 
there is a preferred weak homotopy
equivalence
\[
I(P,D^n) \simeq F(P,G)\, ,
\]
where the right side denotes the function space of unbased maps from 
$P$ to the topological monoid of stable self equivalences of the sphere.
\end{lem}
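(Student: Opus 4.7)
\medskip

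\noindent\emph{Proof plan.} The plan is to combine Lemma \ref{lem:immersion-stable-normal-invt} with Atiyah--Spivak duality. By that lemma there is a preferred weak equivalence $I(P,D^n) \simeq \Omega^{n}_{\perp} Q(P/\partial P)$, so it suffices to produce a preferred weak equivalence $\Omega^{n}_{\perp} Q(P/\partial P) \simeq F(P,G)$, given a basepoint of $I(P,D^n)$. Such a basepoint corresponds, via the lemma, to a stable normal invariant $\alpha\colon S^n\to Q(P/\partial P)$; since $I(P,D^n)$ is assumed non-empty, $\alpha$ exists and, by Spivak's theorem, exhibits the Spivak normal fibration $\nu_P$ of $P$ as stably fiber homotopy trivial. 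Moreover $\alpha$ itself provides the preferred stable trivialization.

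Granted this trivialization, I would invoke Atiyah--Spivak duality (as in \cite{Spivak} or, more conveniently, using the dualizing spectrum formalism of \cite{Klein_dualizing}) to obtain a natural stable equivalence
\[
\Sigma^\infty(P/\partial P)\;\simeq\;\Sigma^n F_\ast(\Sigma^\infty P_+,\,S^0),
\]
i.e.\ $\Sigma^\infty(P/\partial P)$ is the $n$-fold suspension of the Spanier--Whitehead dual of $\Sigma^\infty P_+$. This is the Poincar\'e analogue of the classical identification $\Sigma^\infty(M/\partial M)\simeq \Sigma^n F_\ast(\Sigma^\infty M_+,S^0)$ valid for a stably parallelizable compact manifold $M^n$. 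Applying $\Omega^n\Omega^\infty$ to both sides and using the standard adjunction $\Omega^\infty F_\ast(\Sigma^\infty P_+,S^0)\simeq F(P,QS^0)$ yields a preferred weak equivalence
\[
\Omega^n Q(P/\partial P)\;\simeq\;F(P,QS^0).
\]

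To finish, I would restrict this equivalence to the appropriate subspaces on each side. Under S-duality, $\alpha_\ast[S^n]\in H_n(P,\partial P)^\times$ holds precisely when the Spanier--Whitehead dual stable map $P_+\to S^0$ has degree $\pm1$ on each connected component of $P$; equivalently, the adjoint map $P\to QS^0$ factors through the union of invertible components $G = Q_{\pm 1}S^0\subset QS^0$. Restricting the displayed equivalence to these components then gives the desired $\Omega^{n}_{\perp} Q(P/\partial P)\simeq F(P,G)$.

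The main obstacle will be setting up Atiyah--Spivak duality carefully for the Poincar\'e pair $(P,\partial P)$ with the stable trivialization determined by $\alpha$, with enough naturality to (i) pass from an equivalence of spectra to one of underlying spaces and (ii) identify the fundamental-class condition with the degree-$(\pm1)$ condition on $\pi_0$. Once the duality is in place, the remainder is a routine unpacking of definitions.
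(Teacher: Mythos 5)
Your argument is essentially the same as the paper's: both use Lemma \ref{lem:immersion-stable-normal-invt} to replace $I(P,D^n)$ by $\Omega^n_\perp Q(P/\partial P)$, invoke Spivak/S-duality (justified by the non-emptiness hypothesis, hence triviality of the Spivak fibration) to get $\Omega^n Q(P/\partial P)\simeq F(P_+,QS^0)$, and then match the fundamental-class condition on the left with the degree-$\pm1$ (units) condition on the right to land in $F(P,G)$. You are somewhat more explicit about the duality bookkeeping and the $\pi_0$-identification, but the route and the key ingredients are identical.
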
 

\begin{rem} The corresponding statement
in the smooth case is that the smooth immersions of $P$ to $D^n$ is 
weak equivalent to the function space $F(P,O_n)$, where $O_n$ is the
group of orthogonal $n\times n$ matrices. Note that the smooth version depends
on $n$.

To obtain a smooth statement  which does not depend on $n$, one
should replace the smooth immersion space
by its block analogue.
In this instance one obtains a weak equivalence to the
 function space $F(P,O)$.
\end{rem}

\begin{proof}[Proof of Lemma \ref{lem:smale-hirsch}] 
If $I(P,D^n)$ is non-empty then the Spivak fibration
for $P$ is trivializable, implying  that
$P/\partial P$ is $n$-dual to $P_+$. By S-duality, we have a weak equivalence
\[
F_\ast(P_+,Q(S^0)) \simeq \Omega^n Q(P/\partial P)\, .
\]
Restricting to stable normal invariants on the right corresponds to replacing 
$Q(S^0)$ on the left by its units, namely $G$. The result now follows by Lemma
\ref{lem:immersion-stable-normal-invt}. \end{proof}   

\section{Proof of Theorems 
and \ref{bigthm:browder-fibration} and \ref{bigthm:refined_browder-construction}
\label{sec:proofs}}

Consider the following situation: fix a map
of based spaces 
\[
f\: A\to X
\] and let  $Y$ denote its reduced mapping cone. 
Consider the forgetful functor
\[
wT(Y\to \ast) \to wT(X\to \ast)\, .
\]
Let $Z \in wT(X\to \ast)$ be an object; in particular,
$Z$ has the structure of a based space.
The map $X \to Z$ factors through $Y$
 precisely when the composition
\[
A \to X \to Z
\]
is null homotopic.  In what follows we fix based null homotopy
$CA \to Z$.

\begin{lem} \label{lem:helper_sequence}  With these assumptions, there
is a homotopy fiber sequence
\[
F_\ast(\Sigma A,Z) @>>> |wT(Y\to \ast)| \to |wT(X\to \ast)|\, ,
\]
where the displayed fiber is taken at the basepoint
 $Z\in |wT(X\to \ast)|$.
\end{lem}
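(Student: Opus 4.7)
My plan is to identify the homotopy fiber of $|wT(Y\to \ast)| \to |wT(X\to \ast)|$ at $Z$ with the space of extensions of the structure map $f\: X\to Z$ to a map $Y\to Z$, and then to identify this space of extensions, with basepoint supplied by the fixed null homotopy, with $F_\ast(\Sigma A, Z)$ via the standard Puppe cofiber sequence argument.

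For the first identification, I would invoke the Dwyer--Kan/Rezk moduli-space interpretation: for any model category $\mathcal{M}$, there is a homotopy decomposition
\[
|w\mathcal{M}| \simeq \coprod_{[W]} B\operatorname{haut}(W)
\]
over equivalence classes of objects, with each summand the classifying space of the (simplicial) monoid of self weak equivalences of a cofibrant--fibrant replacement. Applied to $T(X\to\ast)$ and $T(Y\to\ast)$, the forgetful functor restricts, on the components in question, to the natural map $B\operatorname{haut}(Z/Y) \to B\operatorname{haut}(Z/X)$ induced by the monoid inclusion $\operatorname{haut}(Z/Y)\hookrightarrow\operatorname{haut}(Z/X)$. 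The homotopy fiber of this map at the basepoint is the homogeneous space $\operatorname{haut}(Z/X)/\operatorname{haut}(Z/Y)$; by orbit--stabilizer reasoning (acting by post-composition on extensions of $f$), this is the path component containing the basepoint in the space of maps $Y\to Z$ extending $f$. Assembling over all components of $|wT(Y\to\ast)|$ lying over the component of $Z$, i.e., over $Y$-equivalence classes of lifts, the full homotopy fiber becomes the entire space of extensions of $f$ to $Y$.

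For the second step, since $Y = X\cup_A CA$, an extension $f'\: Y\to Z$ of $f$ is exactly a null homotopy $CA\to Z$ of the composite $A\to X\to Z$; the fixed null homotopy furnishes the basepoint $f'$. The Puppe cofiber sequence $A\to X\to Y\to \Sigma A$ produces a coaction $c\: Y\to Y\vee \Sigma A$ for which the action map
\[
\phi\ \longmapsto\ (f'\vee \phi)\circ c
\]
defines a based weak equivalence from $F_\ast(\Sigma A, Z)$ onto the space of extensions of $f$. Combining with the first step yields the claimed homotopy fiber sequence, with the null homotopy of the composition $F_\ast(\Sigma A,Z)\to |wT(Y\to\ast)|\to|wT(X\to\ast)|$ given by the constant path at $Z$ (since all extensions of $f$ restrict to $f$).

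The main obstacle is the first step: rigorously matching the forgetful functor on realizations to the map of classifying spaces $B\operatorname{haut}$, with appropriate care for cofibrant/fibrant replacements so that the relevant function spaces have the correct homotopy type. An alternative route, avoiding the moduli-space language, is to apply Quillen's Theorem B directly to the forgetful functor $wT(Y\to\ast)\to wT(X\to\ast)$, after verifying that the relevant comma categories have the homotopy type of the space of extensions and are homotopy invariant in $Z$.
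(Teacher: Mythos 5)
Your two-step plan matches the architecture of the paper's proof, and the second step is essentially identical to the paper's. Where the two diverge is step 1, and that is precisely where your argument has a real (and self-acknowledged) gap. The paper simply cites \cite[prop.~2.14]{GK1}, which states directly that the homotopy fiber of $|wT(Y\to\ast)| \to |wT(X\to\ast)|$ at $Z$ is identified up to preferred weak equivalence with the restriction-extension space $F_X(Y,Z)$. You instead try to re-derive this via the Dwyer--Kan decomposition $|w\mathcal M|\simeq\coprod B\operatorname{haut}(W)$ together with an orbit--stabilizer identification of $\operatorname{haut}(Z/X)/\operatorname{haut}(Z/Y)$ with a component of $F_X(Y,Z)$.

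The weak point is exactly the orbit--stabilizer step. The orbit map $\operatorname{haut}(Z/X)\to F_X(Y,Z)$, $g\mapsto g\circ f'$, certainly has strict fiber $\operatorname{haut}(Z/Y)$ over $f'$, so it factors through an injection $\operatorname{haut}(Z/X)/\operatorname{haut}(Z/Y)\hookrightarrow F_X(Y,Z)$. But for this to be a \emph{weak equivalence onto the component of} $f'$, you need the $\operatorname{haut}(Z/X)$-orbit of $f'$ to exhaust (up to homotopy) the component of $f'$ in the extension space. That is not automatic: an extension $f''$ homotopic to $f'$ rel $X$ need not be of the form $g\circ f'$ for a self-equivalence $g$ of $Z$ under $X$. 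To make the argument go one needs to establish a transitivity-up-to-homotopy/covering-homotopy statement for the action---which is in effect what \cite[prop.~2.14]{GK1} proves once and for all, and which is not visible from the bare monoid inclusion. You are right that Quillen's Theorem B is the natural tool to close this gap rigorously; in fact \cite[prop.~2.14]{GK1} is proved by essentially such means. So: the proposal is correct in outline and its second half coincides with the paper's computation via the Barratt--Puppe coaction, but the first half needs to be replaced by the citation to \cite{GK1} or by a full Theorem B argument over the appropriate comma categories, rather than the informal orbit--stabilizer heuristic.

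Two small remarks on the second step. First, it is cleaner (and this is how the paper proceeds) to note that the restriction $F_X(Y,Z)\to F_A(CA,Z)$ is a homeomorphism, and then run the coaction argument on $CA$ rather than on all of $Y$; your version $\phi\mapsto(f'\vee\phi)\circ c$ using the coaction $c\colon Y\to Y\vee\Sigma A$ is equivalent but carries along the irrelevant part of $Y$. Second, your identification of the null homotopy of the composite $F_\ast(\Sigma A,Z)\to|wT(Y\to\ast)|\to|wT(X\to\ast)|$ as the constant path at $Z$ is correct, since every extension of $f$ restricts to $f$.
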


\begin{proof} By \cite[prop.~2.14]{GK1}, the homotopy
fiber of $|wT(Y\to \ast)| \to |wT(X\to \ast)|$ taken at $Z$
identified up to preferred weak equivalence with the function space of liftings/extensions 
\[
\xymatrix{
X \ar[r] \ar[d] & Z \ar[d] \\
Y \ar@{.>}[ur]\ar[r]  & \ast .
}
\]
We employ the notation
\[
F_X(Y,Z)
\]
for this space. The given extension equips $F_X(Y,Z)$ with a basepoint and the restriction map $F_X(Y,Z) \to F_{A}(CA,Z)$ is a homeomorphism. Let
$\rho\: CA \to Z$ be the given null homotopy.
The cofiber sequence $A \to A \to CA$ has a coaction map $\delta\: 
CA \to CA \vee \Sigma A$.
Given a map $g\: \Sigma A \to Z$, we form
\[
\rho\star g\: CA @>\delta >> CA \vee \Sigma A @> \rho + g >> Z\ .
\]
Then $g\mapsto \rho\star g$ defines a weak equivalence
 $F_\ast(\Sigma A,Z) \simeq F_{A}(CA,Z)$.
\end{proof}

\begin{proof}[Proof of Theorem \ref{bigthm:refined_browder-construction}]
Recall that $A = \partial P \amalg S^{n-1}$ and $A' = P^\xi \vee S^{n-1}$.
Consider the full subcategory 
\[
wT(A'\to \ast;{\sim}P^\xi)\subset wT(A' \to \ast)
\] 
with objects $C$ such that the composite
\[
P^\xi  \subset A' \to C
\]
is a weak homotopy equivalence. We  claim there is a homotopy equivalence
\[
|wT(A'\to \ast;{\sim} P^{\xi} )| \simeq \Omega^{n-1} P^\xi\, .
\]
To see this, note that $wT(A'\to \ast;{\sim} P^\xi )$ is the right fiber taken
at $P^\xi\in wT(\ast\to \ast)$ of the forgetful functor
\[
wT(S^{n-1}\to \ast) \to wT(\ast\to \ast)
\]
and by \cite[prop.~2.19]{GK1} we may identify this right fiber with
$\Omega^{n-1} P^\xi$. This gives the claim. 

We restrict our attention to the full subcategory
\[
wT_\perp(A'\to \ast;{\sim} P^\xi )\subset wT(A'\to \ast;{\sim} P^\xi )
\] of those objects
$C$ such that the weak map $S^{n-1} \to C @< \sim << P^\xi$ corresponds to an
unstable normal invariant. This additional
constraint yields a homotopy equivalence
\[
|wT_\perp(A'\to \ast;{\sim} P^\xi )| \simeq \Omega^{n-1}_\perp P^\xi\, .
\]
The refined Browder contruction defines a functor
\[
F\: wT_\perp(A'\to \ast;{\sim} P^\xi ) \to \frak L\cal E(P,D^n)
\]
On the other hand, the identity defines a functor
\[
G\: \frak L\cal E(P,D^n)\to wT_\perp(A'\to \ast;{\sim} P^\xi )\, .
\]
It is tautological 
that these functors are inverses to each other.
\end{proof}

\begin{proof}[Proof of Theorem \ref{bigthm:browder-fibration}]
By Theorem \ref{bigthm:refined_browder-construction}
it suffices to consider the map
\[
\frak L E(P,D^n) \to E(P,D^n)\, .
\]
We will make use of the cofiber sequence
\[
P_+ \to A\to A'\, .
\]
If $C \in \cal E(P,D^n)$ is an object, then clearly the obstruction to lifting
it to an object of $\frak L\cal E(P,D^n)$ up to weak equivalence
 is that the composite
\[
P_+ \to A \to C
\]
is null homotopic.  This proves the first part. Now suppose a null homotopy
$P_+ \to C$ has been chosen. Using Lemma \ref{lem:helper_sequence}, we have
a homotopy fiber sequence
\[
F_\ast(\Sigma(P_+),C) \to |wT(A'\to\ast)| \to |wT(A\to\ast)|\, .
\]
One completes the proof using the homotopy cartesian square
\eqref{eqn:e'e-pullback}.
\end{proof}

\section{Proof of Theorem \ref{bigthm:EIpartial}\label{sec:proof-EIpartial}}

\subsection{Principal fibrations} We recall a
basic result about principal fibrations from 
\cite[lem.~6.1]{Klein_susp_spectra}.
Suppose 
$p\:E \to Z$ is a fibration.
We say that $p$ is {\it principal} if there
exists a commutative homotopy cartesian square
of  spaces
\[
\xymatrix{
E \ar[r] \ar[d]_{p} & C\ar[d]  \\
Z\ar[r]  &B
}
\]
such that $C$ is contractible.  Note that the property of being
a principal fibration is
preserved under base changes. Choose a basepoint for $C$. This gives
a basepoint for $B$.
\medskip

Suppose that
$Z$ is connected. If $p\: E\to Z$ is principal, there
is an ``action'' $\Omega B \times E \to E$. 
If there exists a section $Z \to E$, one can combine it
with this action to produce a map
of fibrations $\Omega B \times Z \to E$
covering the identity map of $Z$. This implies that $p$ is
weak fiber homotopically trivial. Let $\secs(p)$
denote the space of sections of $p$. Then we have shown

\begin{lem}\label{lem:section} Assume $p\: E \to Z$ is principal. 
Assume that $\secs(p)$
is non-empty and comes equipped with basepoint.
Then there is a preferred weak equivalence 
$
\secs(p)\,\, \simeq \,\, F(Z,\Omega B)
$.
\end{lem}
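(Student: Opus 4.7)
The plan is to exploit the principal structure to trivialize $p$ as a fibration over $Z$, using the chosen section $s_0$ as basepoint. The homotopy cartesian square with $C$ contractible identifies $E$ up to weak equivalence over $Z$ with the pullback $Z\times_B PB$ of the path-loop fibration along a classifying map $f\: Z \to B$, where $PB$ denotes the based path space of $B$. Under this identification $\Omega B$ acts by concatenation on paths, yielding an action $\mu\: \Omega B \times E \to E$ over $Z$ whose restriction to each fiber is the regular action of $\Omega B$ on itself (up to a canonical weak equivalence).

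Define
\[
\Phi\: \Omega B \times Z \to E, \qquad \Phi(\omega, z) := \mu(\omega, s_0(z)).
\]
This is a map of fibrations over $Z$, with source the trivial projection to $Z$. Over each $z \in Z$ it restricts to $\omega \mapsto \mu(\omega, s_0(z))$, which in the pullback model is the left-concatenation $\omega \mapsto \omega \cdot s_0(z)$ with a chosen path $s_0(z) \in PB$; this map is a homeomorphism onto $p^{-1}(z)$. Hence $\Phi$ is a fiberwise weak equivalence between Serre fibrations over $Z$, so it induces a weak equivalence on section spaces
\[
\secs(p) \,\simeq\, \secs(\Omega B \times Z \to Z) \,=\, F(Z, \Omega B),
\]
and the equivalence is evidently natural in $s_0$, giving the preferred character asserted.

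The main obstacle is purely bookkeeping: the argument wants the $\Omega B$-action $\mu$ and its restriction to fibers to be honest rather than merely up-to-homotopy. The cleanest route is to replace $p$ at the outset by the pullback of the path-loop fibration (using that any principal fibration is weak equivalent over $Z$ to such a pullback, by the definition of \emph{principal} and standard replacement), at which point each ingredient---the action, its fiberwise freeness, and the fact that a fiberwise weak equivalence between Serre fibrations over a cofibrant base induces a weak equivalence of section spaces---becomes either tautological or a standard application of the model structure. No higher coherence is required, and the result follows.
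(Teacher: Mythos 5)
Your argument is correct and is essentially the same as the paper's: both use the principal structure to produce an $\Omega B$-action on $E$, combine it with the chosen section to get a fiberwise equivalence $\Omega B \times Z \to E$ over $Z$, and conclude by passing to section spaces. The paper states this tersely (citing \cite[lem.~6.1]{Klein_susp_spectra} for background), while you make the action explicit by replacing $E$ with the pullback $Z\times_B PB$ of the path-loop fibration -- a reasonable way to unpack the same idea. One small imprecision: concatenation $\omega\mapsto\omega\cdot s_0(z)$ is a homeomorphism onto the fiber only if one uses Moore paths/loops; with the usual unit-interval model it is merely a weak equivalence, which is all that is needed.
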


Let $p \: E \to Z$ be a principal fibration and suppose
that $A \to Y$ is a cofibration
as in Lemma \ref{lem:section}. Then given a lifting problem
\[
\xymatrix{
A \ar[d]\ar[r] & E \ar[d]^p \\
Y \ar[r]_f\ar@{.>}[ur]& Z
}
\]
we let $\lift(f|p)$ be the solution space:  the space of maps $Y\to E$ of $f$ making the diagram commute.

\begin{cor} \label{cor:section} If $\lift(f|p)$ is non-empty then a choice
of lift determines a weak equivalence
$
\lift(f|p)\simeq F_\ast(Y/A,\Omega B)
$.
\end{cor}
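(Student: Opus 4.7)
The plan is to reduce the corollary to Lemma \ref{lem:section} by pulling back the principal fibration along $f$ and reinterpreting the lifting problem as a relative section problem. First I would form the pullback fibration $q\: f^\ast E \to Y$. Since principality is preserved under base change, $q$ is itself a principal fibration, obtained from the pullback square
\[
\xymatrix{
f^\ast E \ar[r]\ar[d]_q & C \ar[d] \\
Y \ar[r] & B
}
\]
in which the bottom map is the composite $Y \xrightarrow{f} Z \to B$. A lift of $f$ to $E$ corresponds bijectively and naturally (as a space) to a section of $q$; under this identification, a lift extending the given map $A\to E$ corresponds to a section of $q$ whose restriction along the cofibration $A\hookrightarrow Y$ agrees with the section of $q|_A$ induced by $A\to E$ together with $A\to Y$. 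Hence
\[
\lift(f|p) \,\simeq\, \secs_A(q)\, ,
\]
where the right side denotes sections of $q$ extending the given one over $A$.

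Next I would invoke Lemma \ref{lem:section} applied to $q$. A chosen element of $\lift(f|p)$ provides a section of $q$, hence a basepoint for $\secs(q)$, yielding a preferred weak equivalence $\secs(q)\simeq F(Y,\Omega B)$. Under this equivalence, the chosen section is sent to the constant map $Y\to \Omega B$ at the basepoint, and the restriction map $\secs(q)\to \secs(q|_A)$ corresponds to the restriction map $F(Y,\Omega B)\to F(A,\Omega B)$. The restricted section over $A$ sits as the constant map $A\to \Omega B$ at the basepoint.

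Taking homotopy fibers of the restriction over this basepoint, we see that $\secs_A(q)$ is weakly equivalent to the homotopy fiber of $F(Y,\Omega B)\to F(A,\Omega B)$ at the basepoint, which, since $A\hookrightarrow Y$ is a cofibration, is exactly $F_\ast(Y/A,\Omega B)$. Combining with the identification from the first paragraph yields the desired weak equivalence. The main obstacle, such as it is, will be verifying that the identification of $\lift(f|p)$ with $\secs_A(q)$ is natural and that the trivialization afforded by Lemma \ref{lem:section} intertwines restriction to $A$ with the restriction of function spaces; both should follow from the compatibility statements baked into the proof of Lemma \ref{lem:section}, since the $\Omega B$-action used there is defined over all of $Z$ and pulls back to one defined over all of $Y$.
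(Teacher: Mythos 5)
Your proof is correct and follows essentially the same route as the paper: pull back to $f^\ast E\to Y$, identify lifts with sections rel $A$, trivialize via the chosen lift, and read off $F_\ast(Y/A,\Omega B)$. The only cosmetic difference is that you package the last step as the homotopy fiber of $F(Y,\Omega B)\to F(A,\Omega B)$, whereas the paper directly identifies the rel-$A$ sections of the trivialized fibration as the based function space; these are the same observation.
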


\begin{proof} Observe that $f^*E \to Y$ is principal.
Furthermore $\lift(f|p) \cong \secs(f^*E \to Y)$. Hence if $\lift(f|p)$ is nonempty
we can identify $f^* E$ with the trivial fibration $\Omega B \times Y \to Y$
once a basepoint lift has been chosen.
With respect to the identification, the given map $A \to f^*E$ corresponds
to the inclusion $\ast \times A \to \Omega B \times Y$.

Hence,  $\lift(f|p)$ is then identified up to weak equivalence
with  the space of sections of the 
trivial fibration $\Omega B \times Y \to Y$ which are fixed on $A$.
But this is just  $F_\ast(Y/A,\Omega B)$.
\end{proof}

\subsection{The Goodwillie tower of the identity}
Let $\Bbb I\: \Top_\ast\to \Top_\ast$ be the identity functor.
We recall some of the basic properties of its Goodwillie tower.
(cf.\ \cite{Good_calc1},
\cite{Good_calc2}, \cite{Good_calc3}, 
\cite{Johnson-deriv}).

\begin{thm} 
\label{thm:good-identity}
There is a tower of fibrations
of homotopy functors on based spaces
$$
\cdots \to P_2\Bbb I(X) \to P_1\Bbb I(X) 
$$
and compatible natural transformations
$X \to P_j\Bbb I(X)$ such that
\begin{itemize} 
\item if $X$ is $1$-connected, then the natural map
$$
X \to \lim_j P_j\Bbb I(X)
$$
is a weak equivalence.
\item There is a natural weak equivalence
$P_1\Bbb I(X) \simeq Q(X)$;
\item For $j\ge 2$, the fibration $P_j\Bbb I(X) \to P_{j{-}1}\Bbb I(X)$
is principal (cf. \cite[lem~2.2]{Good_calc3};
\item the $j$-th layer $L_j\Bbb I(X) := \text{\rm fib}(P_j\Bbb I(X) \to P_{j{-}1}\Bbb I(X))$ is naturally
weak equivalent to the functor 
$$
X \mapsto \Omega^\infty (\cal W_j \smsh_{h \Sigma_j} X^{[j]}) \,  ;
$$
where the spectrum with $\Sigma_j$-action $\cal W_j$ is  as in \cite{Johnson-deriv}.
\end{itemize}
\end{thm}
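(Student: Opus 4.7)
The plan is to recognize Theorem \ref{thm:good-identity} as the specialization to $F = \Bbb I$ of Goodwillie's general machinery of homotopy functor calculus, so the proof amounts to assembling results from \cite{Good_calc1,Good_calc2,Good_calc3} and \cite{Johnson-deriv} in the right order. I would not reprove any of these; instead I would state each as a black box and verify that its hypotheses hold for the identity functor.

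First, I would recall from \cite{Good_calc2} the general construction of the $j$-excisive approximation $P_j F$ of a reduced homotopy functor $F\: \Top_\ast \to \Top_\ast$. For a based space $X$, let $T_j F(X)$ denote the $\holim$ of $F$ applied to the punctured $(j{+}1)$-cube of joins $S \mapsto X \star S$ indexed by the proper non-empty subsets of a $(j{+}1)$-element set. The canonical map $F \to T_j F$ iterates, and $P_j F$ is the homotopy colimit of the sequence $F \to T_j F \to T_j T_j F \to \cdots$. Goodwillie's main theorem identifies $P_j F$ as the universal $j$-excisive approximation of $F$; the tower $\cdots \to P_j F \to P_{j-1} F$ and the compatible natural transformations $F \to P_j F$ arise from the evident forgetful maps relating $(j{+}1)$-cubes to $j$-cubes. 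Specializing to $F = \Bbb I$ yields the tower and the maps $X \to P_j \Bbb I(X)$ of the statement.

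Convergence on $1$-connected $X$ is the analyticity theorem of \cite[thm.~1.13]{Good_calc3}: the identity functor is $1$-analytic, so for simply connected $X$ the connectivity of $X \to P_j \Bbb I(X)$ grows linearly in $j$, and hence $X \to \lim_j P_j \Bbb I(X)$ is a weak equivalence. The linearization $P_1 \Bbb I$ is reduced and excisive, hence a homology theory in $X$; tracing the definition shows the natural map $X \to P_1 \Bbb I(X)$ to be a stable equivalence, giving $P_1 \Bbb I(X) \simeq Q(X)$. Principality of $P_j \Bbb I \to P_{j-1} \Bbb I$ for $j \ge 2$ is \cite[lem.~2.2]{Good_calc3}, which constructs, for any reduced homotopy functor, an explicit classifying square with contractible upper-right corner whose homotopy pullback is the fibration in question. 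Finally, the general classification of homogeneous functors of degree $j$ yields a canonical description $L_j \Bbb I(X) \simeq \Omega^\infty(\partial_j \Bbb I \smsh_{h \Sigma_j} X^{[j]})$, where $\partial_j \Bbb I$ is a spectrum with $\Sigma_j$-action; the identification $\partial_j \Bbb I \simeq \cal W_j$, together with the fact that $\cal W_j$ is unequivariantly a wedge of $(j-1)!$ copies of $S^{1-j}$, is precisely the content of \cite{Johnson-deriv}.

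The only genuinely hard step is the last one, namely the identification of the derivatives $\partial_j \Bbb I$ with $\cal W_j$, which rests on a careful analysis of the partition-complex subquotients of the cubes defining $T_j^\infty \Bbb I$. Since the present paper quotes this identification as a black box, the proof of Theorem \ref{thm:good-identity} really does reduce to the citation assembly above; no original argument is required.
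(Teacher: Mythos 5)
Your proposal is correct and takes essentially the same approach as the paper: the paper states this theorem purely as a recollection of known results, citing \cite{Good_calc1}, \cite{Good_calc2}, \cite{Good_calc3} and \cite{Johnson-deriv} with no further argument, and your assembly (construction of $P_j$ via the join construction $T_j$, convergence from $1$-analyticity of $\Bbb I$, the identification $P_1\Bbb I \simeq Q$, principality from \cite[lem.~2.2]{Good_calc3}, and the layers via the classification of homogeneous functors together with Johnson's identification of the derivatives of the identity with $\cal W_j$) is exactly the intended justification. Nothing more is required.
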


\subsection{The spaces $E_j(P,D^n)$} Recall 
that $P$ is sectioned by $\xi$.
We fix the natural identification
$P_1\Bbb I(X) \simeq Q(X)$. Then we have a map
\begin{equation} \label{eqn: normal_ivariant_to_P1}
\Omega^{n-1}_{\perp}Q(P^\xi) \to \Omega^{n-1}P_1\Bbb I(P^\xi)\, .
\end{equation}
Note that the source of this map is identified with a 
collection of components of the target.

\begin{defn} The space $\frak L E_j(P,D^n)$ is defined to be the pullback of the diagram
\[
\Omega^{n-1}_{\perp}Q(P^\xi) 
@>>> \Omega^{n-1} P_1\Bbb I(P^\xi) @<<< \Omega^{n-1} P_j\Bbb I(P^\xi)\, .
\]
\end{defn}

\begin{proof}[Proof of Theorem \ref{bigthm:EIpartial}]
 It is a consequence of the definition that
there is a tower of fibrations
\begin{equation}\label{eqn:towerLE}
\cdots @>>> \frak L E_2(P,D^n) @>>> \frak L E_1(P,D^n) \, .
\end{equation}
By definition $\frak L E_1(P,D^n) \cong \Omega^{n-1}_{\perp}Q(P^\xi)$
and by Lemma \ref{lem:immersion-stable-normal-invt}, 
$\Omega^{n-1}_{\perp}Q(P^\xi) \simeq I(P,D^n)$.
Moreover, the square
\begin{equation} \label{eqn:key-diagram}
\xymatrix{
\lim_j \frak L E_j(P,D^n) \ar[r] \ar[d] & 
\lim_j \Omega^{n-1} P_j\Bbb I(P^\xi) \ar[d]\\
\frak L E_1(P,D^n) \ar[r] & P_1\Bbb I(P^\xi)
}
\end{equation}
is homotopy cartesian. The lower right corner of
this diagram is identified with $Q(P^\xi)$.
Since that map $\partial P \to P$ is at least $2$-connected,
it follows that the section $K \to \partial P$ is at least $1$-connected. Hence,
$P^\xi$ is $1$-connected and the upper right corner of diagram
 is identified with $ \Omega^{n-1} P^\xi$.  
 Substituting these identifications,
 we obtain a homotopy cartesian square
  \begin{equation} \label{eqn:key-diagram2}
\xymatrix{
\lim_j \frak L E_j(P,D^n) \ar[r] \ar[d] & \Omega^{n-1}P^\xi \ar[d]\\
I(P,D^n) \ar[r] & \Omega^{n-1} Q(P^\xi)\, .
}
\end{equation}
Clearly, if we replace the upper left corner by the space of unstable normal
invariants $\Omega^{n-1}_\perp P^\xi$ the square remains homotopy cartesian,
since a point of $\Omega^{n-1}P^\xi$ yields an unstable normal invariant
if and only if the associated point of $\Omega^{n-1} Q(P^\xi)$ yields
a stable normal invariant.
It follows that the map  
\[
\Omega^{n-1}_\perp P^\xi \to \lim_j \frak L E_j(P,D^n)
\]
is  a weak equivalence. Therefore, the composite map
\[
\frak LE(P,D^n) @> \sim >> \Omega^{n-1}_\perp P^\xi  @> \sim >> \lim_j \frak L E_j(P,D^n)
\]
is also a weak equivalence.

We next identify the layers of the tower \eqref{eqn:towerLE} whenever they are
non-empty. 
The fiber
of the map $\Omega^{n-1} P_j\Bbb I(P^\xi) \to \Omega^{n-1}  P_{j-1}\Bbb I(P^\xi)$
at any basepoint is just the lifting space
\begin{equation}\label{eqn:lifting_space}
\xymatrix{
\ast \ar[r] \ar[d] & P_j\Bbb I(P^\xi) \ar[d] \\
S^{n-1} \ar[r] \ar@{.>}[ur] & P_{j-1}\Bbb I(P^\xi)\, .
}
\end{equation}
If this lifting space is non-empty, then Corollary \ref{cor:section} says that
after making a choice of lift, the
space of all such lifts 
is identified with the stable function space
\[
F_\ast(S^{n-1},\cal W_j\smsh_{h\Sigma_j} (P^\xi)^{[j]})\, .
\]
We need to rewrite this stable function space 
up to homotopy in the requisite form. 
First identify it as the zeroth space
of the homotopy orbit spectrum
\begin{equation}\label{eqn:first}
(S^{1-n} \smsh \cal W_j\smsh (P^\xi)^{[j]}))_{h\Sigma j}\, .
\end{equation}
The plan is to rewrite the latter in terms of $P_+$ using Spanier-Whitehead duality.
Assuming that $I(P,D^n)$ is non-empty guarantees that
$P^\xi$ is $(n-1)$-dual to $P_+$.  We can write this as
\[
\Sigma^\infty P^\xi \simeq \Sigma^{n-1} D(P_+) \, ,
\]
where $D(P_+)$ is the $0$-dual of $P_+$, i.e.,
 $F_\ast(P_+,S)$, where $S$ denotes the sphere spectrum.
If we smash this identification with itself $j$-times, we obtain an equivariant weak equivalence of spectra with $\Sigma_j$-action
\[
\Sigma^\infty (P^\xi)^{[j]} \simeq  F_\ast((P^{\times j})_+,\Sigma^\infty S^{(n-1)(V_j+1)}) \, .
\]
Substituting this into \eqref{eqn:first}, and doing some minor rewriting,
we obtain the spectrum
\[
F_\ast((P^{\times j})_+,\cal W_j\smsh S^{(n-1)V_j} )_{h\Sigma_j}\, .
\]
The zeroth space of this spectrum is thus identified with
the homotopy fibers of $\frak LE_j(P,D^n) \to \frak LE_{j-1}(P,D^n)$
whenever these are non-empty.

Lastly, we need to exhibit the obstruction $\ell_{j-1}$.  
According to \cite[lem.~2.2]{Good_calc3} there is a
$j$-homogeneous functor $X\mapsto R_j\Bbb I(X)$ and a homotopy cartesian square
\[
\xymatrix{
P_j \Bbb I(X) \ar[r] \ar[d] & C\ar[d]  \\
P_{j-1} \Bbb I(X) \ar[r] &
R_j\Bbb I(X)
}
\]
where $C$ is contractible. By the classification of $j$-homogeneous functors \cite{Good_calc3}, \cite[p.~5]{Good_calc1},
we infer
\[
R_j\Bbb I(X) \simeq \Omega^\infty \cal V_j \smsh_{h\Sigma_j}X^{[j]}
\]
for a some spectrum with $\Sigma_j$-action $\cal V_j$. Furthermore, the  map
\[
L_j\Bbb I(X) \to \Omega R_j\Bbb I(X)
\]
is a weak equivalence of $j$-homogenous functors. It follows that there
is a weak equivalence of spectra with $\Sigma_j$-action
\[
\cal V_j \simeq \Sigma \cal W_j\, .
\]
Consequently, setting $X = P^\xi$,
 we have a homotopy cartesian square
 \[
\xymatrix{
P_j \Bbb I(P^\xi) \ar[r] \ar[d] & C\ar[d]  \\
P_{j-1} \Bbb I(P^\xi) \ar[r] &
\Omega^\infty \Sigma\cal W_j \smsh_{h\Sigma_j}(P^\xi)^{[j]}
}
\]
in which $C$ is contractible. Hence, the obstruction up to homotopy to a lifting
a based map $x\: S^{n-1} \to P_{j-1} \Bbb I(P^\xi)$ to 
a based map $S^{n-1} \to P_{j} \Bbb I(P^\xi)$ is given by 
the homotopy class of the  composition
\begin{equation} \label{eqn:ell}
 S^{n-1} @>>> P_{j-1} \Bbb I(P^\xi) \to 
\Omega^\infty \Sigma\cal W_j \smsh_{h\Sigma_j}(P^\xi)^{[j]}\, .
\end{equation}
In particular, if  
$x\in E_{j-1}(P,D^n) \subset  \Omega^{n-1} P_{j-1} \Bbb I(P^\xi)$
is a point, then
we define $\ell_{j-1}(x)$ to be the homotopy class of 
\eqref{eqn:ell}. Then $\ell_{j-1}(x)$ {\it a priori} lies in the abelian group
\[
\{ S^{n-1},\Sigma \cal W_j \smsh_{h\Sigma_j} (P^\xi)^{[j]}\}_\ast
\]
Again by duality, we can rewrite the latter up to canoncial isomorphism as
\[
\pi_0(F_\ast(P^{\times j}_+,\cal W_j\smsh S^{(n-1)(V_j-1)+1})_{h\Sigma_j}) \, . \qedhere
\] 
\end{proof}

\section{Appendix: relationship with manifold calculus \label{sec:relationship}}
It is legitimate to ask what
the tower of Theorem \ref{bigthm:EIpartial} has to do with the
Goodwillie-Weiss manifold calculus \cite{Weiss}. 
Here is one possible scenario: 
suppose that $Q$ is a compact smooth $(n-1)$-manifold---which we assume
admits a handle decomposition
with handles of index at most $k\le n-4$.
We consider the forgetful/decompression map
\begin{equation} \label{eqn:compare}
E^{\sm}(Q,D^{n-1}) \to \frak LE(Q{\times} D^1,D^{n})
\end{equation}
from the space of smooth embeddings of $Q$ in $D^n$
to the space of unlinked Poincar\'e embeddings of $Q{\times} D^1$ in $D^{n}$.

\begin{conjecture} \label{conj:compare} The map
\eqref{eqn:compare} induces a map of towers
from the Goodwillie-Weiss tower for $E^{\sm}(Q,D^{n-1})$
to the tower of Theorem \ref{bigthm:EIpartial} for $\frak LE(Q{\times} D^1,D^{n})$.
\end{conjecture}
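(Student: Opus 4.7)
My plan is to construct the map of towers levelwise, building it up inductively from the base. For the first stage, Lemma \ref{lem:smale-hirsch} identifies $\frak LE_1(Q{\times} D^1, D^n) \simeq I(Q{\times} D^1, D^n) \simeq F(Q, G)$, while the Smale--Hirsch theorem identifies $E^{\sm}_1(Q, D^{n-1})$ with $F(Q, O_{n-1})$. The natural stabilization $O_{n-1} \to G$ then provides the first-stage comparison, and one needs to verify that the decompression map \eqref{eqn:compare} lies over this comparison in the expected sense.

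For higher $j$, I would attempt to match the layers of both towers using the explicit descriptions available on each side. The Goodwillie--Weiss layer at stage $j$ is a space of $\Sigma_j$-equivariant sections of a bundle over the ordered configuration space $F(Q,j)$ with fiber controlled by the $j$-th space of the little disks operad. The Poincar\'e layer at stage $j$ is $F^{\st}_\ast((Q{\times} D^1)^{\times j}_+, \cal W_j \smsh S^{(n-1)V_j})_{h\Sigma_j}$, assembled from the Goodwillie derivatives $\cal W_j$ of the identity functor. By the Arone--Ching theorem, $\cal W_j$ is identified (up to shift) with the Koszul dual of the little disks operad, while Poincar\'e--Lefschetz duality relates $Q^{\times j}_+$ with the one-point compactification of $F(Q,j)$. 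Feeding these two inputs into the layer descriptions should, in a suitable stable range, produce equivalences between the two descriptions, with the occurrences of $V_j$ appearing precisely where the duality shifts demand.

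The main obstacle will be achieving a genuinely natural comparison rather than merely a term-by-term identification of layers. The Goodwillie--Weiss tower is built as a homotopy limit over the poset of open handlebody neighborhoods of $Q$, whereas the tower of Theorem \ref{bigthm:EIpartial} is constructed from pullbacks involving Goodwillie's tower of the identity functor applied to $P^\xi \simeq \Sigma Q_+$. Reconciling these two different flavors of functor calculus in a single commutative ladder with the tower structure maps is the central difficulty. One promising route is to factor \eqref{eqn:compare} through an intermediate block-embedding tower using surgery theory, and exploit that the block tower admits comparison maps to both the smooth and Poincar\'e towers. A useful sanity check would be the case $Q = D^k \times T$ with $T$ a finite set, where both towers are tractable via Example \ref{ex:little-disks} and explicit Hilton--Milnor-type calculations with the little disks operad.
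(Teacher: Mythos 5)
The statement you are addressing is a \emph{conjecture} in the paper; the author does not prove it. The appendix only offers ``evidence\ldots\ on the level of layers,'' which is precisely the gap your third paragraph identifies as the ``main obstacle.'' So your proposal is incomplete for the same reason the paper leaves this open: nobody has constructed the actual map of towers.

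That said, your layer comparison runs along a genuinely different route from the paper's. The paper builds an explicit chain of five maps between compactly-supported section spaces: (a) a Pontryagin--Thom map from the $j$-th manifold-calculus layer of $E^{\sm}(Q,D^{n-1})$ to the corresponding layer for the cofunctor $\mathcal{O}\mapsto\Omega^{n-1}\Sigma^{n-1}\mathcal{O}^+$; (b) a stabilization landing in sections of a Borel-constructed bundle over $\tbinom{Q}{j}$ with fiber $\Omega^\infty(\mathcal{W}_j\wedge S^{(n-1)V_j})$; (c) an Adams isomorphism passing to $\Sigma_j$-homotopy orbits of sections over ordered configurations; (d) excision onto the pair $(Q^{\times j},\Delta)$; and (e) relaxation, forgetting the vanishing condition along the diagonal, which lands in the layer $F^{\st}_\ast((Q^{\times j})_+,\mathcal{W}_j\wedge S^{(n-1)V_j})_{h\Sigma_j}$ of Theorem~\ref{bigthm:EIpartial}. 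You instead invoke Arone--Ching Koszul duality plus Poincar\'e--Lefschetz to relate the configuration-space layer to the $\mathcal{W}_j$-layer directly. This is plausible and perhaps more conceptual, but it glosses over the Adams/excision bookkeeping, and --- crucially --- you do not estimate connectivities. The force of the paper's evidence is precisely that maps (c), (d) are equivalences and (b), (e) have connectivity growing linearly in $j$ (namely $j(n-k-2)$ and $(j-1)(n-2)-k-1$), so that only the Pontryagin--Thom map (a), of fixed connectivity $2n-3k-5$, fails to become an equivalence in the limit.

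The step neither you nor the paper can carry out is promoting a termwise identification of layers to a map of towers compatible with the structure maps. Your idea of a mediating block-embedding tower is a reasonable heuristic, consonant with the $E^{\sm}\to E^{\bl}\to E^{\pd}$ philosophy in the introduction, but the Poincar\'e tower of Theorem~\ref{bigthm:EIpartial} is built from the Goodwillie tower of the identity applied to $P^\xi$, not from a presheaf on open subsets of $Q$; the two towers are simply not indexed on the same kind of category, and a rigid natural comparison would require a new model for at least one of them. Be explicit in your write-up that what you have is evidence, not a proof, mirroring the paper's own framing.
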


We will give some evidence for this conjecture on the level of layers.
In what follows we shall assume that the reader is familiar with 
\cite{Weiss}.
Here is some notation: suppose $T$ is a finite set. We write 
\[
\cal P(Q,T)
\]
for the configuration space of the injective functions from
$T$ to the interior of $Q$.
This has a free action of $\Sigma_T$, the symmetric group of automorphisms
of $T$. In the case when $T =\underline j = \{1,\dots,j\}$, we write 
$\cal P(Q,j) := \cal P(Q,\underline j)$.
Let \[
\tbinom{Q}{T} := \cal P(Q,T)_{\Sigma_T}
\] 
the orbit space of the action of $\Sigma_T$ on $\cal P(Q,T)$;
this is the configuration space of (unordered) subsets of the interior
of $Q$ of cardinality $|T|$. Similarly, we write 
$\tbinom{Q}{j} := \tbinom{Q}{\underline j}$.

The quotient map $\pi\: \cal P(Q,j) \to \binom{Q}{j}$ is a 
principal covering space with respect to the group $\Sigma_j$;
we let $i\: \binom{Q}{j}\to B\Sigma_j$ denote its classifying map.
Then $i\circ \pi$ is the constant map to the basepoint. Let 
$c\: Q^{\times j} \to B\Sigma_j$ be the constant map. Then
$c$ restricted to $\cal P(Q,j)$ is $i \circ \pi$. There is an inclusion
$\cal P(Q,j) \subset Q^{\times j}$ whose complement is the diagonal
$\Delta \subset Q^{\times j}$.

The evidence we give for Conjecture \ref{conj:compare} 
is a diagram
{\small \[
\xymatrix{
H_{\text{cs}}^\bullet(\tbinom{Q}{j}; \cal E_j)\ar[r]^{(a)} &
H_{\text{cs}}^\bullet(\tbinom{Q}{j}; \cal F_j)\ar[d]^{(b)} \\
& H_{\text{cs}}^\bullet(\tbinom{Q}{j}; i^*\cal G_j) \ar[d]_{\simeq}^{(c)} \\
& H_{\text{cs}}^\bullet(\cal P(Q,j);(i\circ \pi)^*\cal G_j)_{h\Sigma_j} \\
& H^{\bullet}(Q^{\times j}, 
\Delta ; c^\ast\cal G_j)_{h\Sigma_j}  \ar[r]^(.55){(e)} \ar[u]^{\simeq}_{(d)}&
H^{\bullet}(Q^{\times j}; c^*\cal G_j)_{h\Sigma_j}\, .
}
\]}
We first summarize what the maps of the diagram are about and thereafter
we give
some of the details. For $j \ge 2$,
the source of (a) is the $j$-th layer for the manifold calculus tower
of the smooth embedding space $E^{\sm}(Q,D^{n-1})$. The target
of (a) is the $j$-th layer for the manifold calculus tower of, in the terminology
of \cite[defn.~2.2]{Weiss},
the good cofunctor $\cal O \mapsto \Omega^{n-1}\Sigma^{n-1}\cal O^+$, where
$\cal O \subset Q$ varies over the open subsets of the interior of $Q$
and $\cal O^+$ denotes  one-point compactification. The map (a) is
induced by the Pontryagin-Thom construction.
The map
(b) is a kind of stabilization map. The equivalence (c) is a version of the Adams
isomorphism (which is valid since the source and target in this case
are infinite loop spaces)
and the equivalence (d) is excision. The map (e) is a relaxation of constraints.
The target of (e) coincides with the $j$-th layer of the tower
of  Theorem \ref{bigthm:EIpartial} for $\frak LE(Q{\times} D^1,D^n)$.

We now proceed to give more detail.
In the above, $\cal E_j$ and $\cal F_j$ are fibrations over $\tbinom{Q}{j}$,
and $\cal G_j$ is a fibration over $B\Sigma_j$. The notation
$H^\bullet(B;\cal U)$ refers to the space of sections 
of a fibration $\cal U \to B$
(which we feel compelled to indicate as ``unstable'' cohomology), 
and similarly, $H^\bullet_{\text{cs}}(B;\cal U)$
refers to the space of sections with compact support relative
to a given fixed section; these are the sections which agree with the
given one outside a compact subset of $B$.

The fibration $\cal E_j \to \binom{Q}{j}$ may be described as follows: if 
$T\in \binom{Q}{j}$, then we form the $j$-cube of spaces
\begin{equation}\label{eqn:E-fibration}
U \mapsto \cal P(D^{n-1},U), \qquad U \subset T\, .
\end{equation}
The total homotopy fiber of \eqref{eqn:E-fibration}
is the fiber at $T$ of the fibration $\cal E_j \to \binom{Q}{j}$ 
(we will leave it to the reader to provide the topology
on $\cal E_j$ as well as on $\cal F_j$).
By \cite[sum.~4.2]{Weiss}
we know that $H^\bullet_{\text{cs}}(\tbinom{Q}{j}; \cal E_j)$ is the 
$j$-th layer of the Goodwillie-Weiss tower for the space of smooth embeddings 
$E^{\sm}(Q,D^{n-1})$ when $j \ge 2$.

The fibration $\cal F_j \to \binom{Q}{j}$ has fiber at $T$ given by
 the total homotopy fiber
of the $j$-cube
\begin{equation}\label{eqn:F-fibration}
U \mapsto \Omega^{n-1}\Sigma^{n-1}(U_+), \qquad U \subset T\, .
\end{equation}
The map $\cal E_j \to \cal F_j$ is induced by the Pontryagin-Thom construction
$\cal P(D^{n-1},U) \to \Omega^{n-1}\Sigma^{n-1}(U_+)$ with respect to the trivial framing.
This induces the map (a). The map (a) is $(2n-3k-5)$-connected \cite{Klein_compress}.

The fibration $\cal G_j \to B\Sigma_j$
arises as follows: take the unreduced Borel construction of $\Sigma_j$ acting
on $\cal W_j \smsh S^{(n-1)V_j}$. This gives a fibered spectrum
\begin{equation} \label{eqn:fibered_spectrum}
E\Sigma_j \times_{\Sigma_j}(\cal W_j \smsh S^{(n-1)V_j}) \to B\Sigma_j\, .
\end{equation}
Then $\cal G_j$ is the fiberwise zeroth space of \eqref{eqn:fibered_spectrum}, i.e,
$\cal G_j$ is the unreduced Borel construction of $\Sigma_j$ acting on
$\Omega^\infty(\cal W_j \smsh S^{(n-1)V_j})$. 
The fibration $i^*\cal G_j \to \binom{Q}{j}$ is obtained by taking the base change
of $\cal G_j$ along $i$.

The target of (c) is the homotopy orbits of $\Sigma_j$ acting on 
the section space with compact supports  of the fibration 
$(i\circ \pi)^\ast\cal G_j \to \cal P(Q,j)$. Here we are using the observation
that this last map is $\Sigma_j$-equivariant ($\Sigma_j$-acts
on $(i\circ \pi)^\ast\cal G_j$ because it is a trivial 
fibration over $\cal P(Q,j)$ whose fiber $\Omega^\infty(\cal W_j \smsh S^{(n-1)V_j})$
comes equipped with
a $\Sigma_j$-action).
As already mentioned, the map (c) is a homotopy equivalence by 
the Adams isomorphism \cite[\S2]{May} and the map (d) is an equivalence by  excision.
The map (e) is the map which forgets that a section is fixed
along the diagonal; it is $((j{-}1)(n{-}2){-}k{-}1)$-connected.
 
The map (b) is induced by a map $\cal F_j\to i^\ast\cal G_j$
which on the level of fibers at $T$ arises from the evident stabilization (inclusion)
map 
\[
\Omega^{n-1}\Sigma^{n-1}(U_+) \to \colim_{n \to \infty} \Omega^{(n-1)+(n-1)V_j}
\Sigma^{(n-1)+(n-1)V_j}(U_+)
\]
which is a map of $j$-cubes.
As noted above, the total homotopy fiber of the source is identified with the
fiber of $\cal F_j$ at $T$ and an
 analysis which we omit shows that total homotopy fiber of the target 
is identified with the fiber of $i^\ast\cal G_j$ at $T$. In fact, the Hilton-Milnor theorem
shows that the map $\cal F_j \to i^\ast\cal G_j$ is $((j+1)(n-2)+2 - n)$-connected.
By subtracting the handle dimension of $Q^{\times j}$ (i.e., $jk$) 
we infer that (b) is  $j(n{-}k{-}2)$-connected.

Finally, observe that the connectivity of each of
the maps (b) and (e) is a linear function
of $j$ with positive slope. Thus the map (a) is the only
map of the diagram
which does not tend to weak equivalence as $j$ gets large.

\end{document}